





\documentclass[sn-mathphys]{sn-jnl}



\jyear{2021}%
\newcommand{\rset}{\mathbb{R}}
\DeclareMathOperator*{\argmin}{arg\,min}
\providecommand{\norm}[1]{\lVert#1\rVert}

\usepackage{amsmath}
\usepackage{amssymb}
\usepackage{amsthm}
\usepackage{graphicx}
\usepackage{comment}
\theoremstyle{thmstyleone}%
\newtheorem{theorem}{Theorem}
%

\theoremstyle{thmstyletwo}%
\newtheorem{example}{Example}%
\newtheorem{remark}{Remark}%
\newtheorem{lemma}{Lemma}%
\newtheorem{assumption}{Assumption}%

\theoremstyle{thmstylethree}%
\newtheorem{definition}{Definition}%

\raggedbottom

\usepackage{color}
\newcommand{\red}{\textcolor{black}}
\newcommand{\blue}{\textcolor{black}}

\usepackage[symbol]{footmisc}

\begin{document}

\title[Y. Nabou and I. Necoara]{ Efficiency of higher-order algorithms for minimizing composite functions}


\author[1]{\fnm{Yassine} \sur{Nabou}}\email{yassine.nabou@stud.acs.upb.ro}
\author[1,2]{\fnm{Ion} \sur{Necoara}}\email{ion.necoara@upb.ro}

\affil[1]{\orgdiv{Automatic Control and Systems Engineering Department}, \orgname{National University of Science and Technology Politehnica Bucharest}, \orgaddress{\street{Spl. Independentei 313}, \postcode{060042}, \city{Bucharest}, \country{Romania}}}

\affil[2]{\orgdiv{Gheorghe Mihoc-Caius Iacob Institute of Mathematical Statistics and Applied Mathematics
of the Romanian Academy}, \postcode{050711}, \city{Bucharest}, \country{Romania}}

\abstract{Composite minimization involves a collection of  functions which are aggregated in a nonsmooth manner. It covers, as a particular case, smooth approximation of minimax games, minimization of max-type functions, and simple composite minimization problems, where the objective function has a nonsmooth component.  We design a higher-order majorization algorithmic framework for fully composite problems (possibly nonconvex). Our framework  replaces  each  component  with a higher-order surrogate such that the corresponding error function has a higher-order Lipschitz continuous derivative.  We present convergence guarantees for our method for composite optimization problems with (non)convex and (non)smooth objective function.  In particular,  we  prove  stationary point convergence  guarantees for general nonconvex (possibly nonsmooth) problems and under Kurdyka-Lojasiewicz (KL) property of the objective function we derive improved rates depending on the KL parameter. For convex (possibly nonsmooth) problems we also provide sublinear convergence rates.}

\keywords{Composite optimization, (non)convex  minimization,  higher-order methods, Kurdyka-Lojasiewicz property,  convergence rates. }

\maketitle

\section{Introduction}\label{sec1} 
In this work, we consider the class of general composite optimization problems:  

\vspace{-0.3cm}

\begin{equation}\label{eq:optpb}
	\min_{x \in \text{dom}f} f(x):= g\big(F(x)\big)+h(x), 
\end{equation}

\vspace{-0.0cm}

\noindent where  $h:\mathbb{E}\to \bar{\rset}$, with $\bar{\rset} = \rset \cup \{+ \infty\}$, and  $F: \mathbb{E} \to \bar{\mathbb{R}}^{m}$ are general proper lower semicontinuous functions on their closed domains and $g:\rset^{m} \to \rset$ is a proper closed convex function defined everywhere. Here, $\mathbb{E}$ is a finite-dimensional real vector space and $F=(F_1, \cdots, F_m)$.  Note  that $\text{dom}\,f =g(\text{dom}F)\cap\text{dom}\,h $. 
This formulation unifies many particular cases, such as smooth approximation of minimax games,  max-type minimization problems or exact penalty formulations of nonlinear programs, while recent instances include robust phase retrieval  and matrix factorization problems \cite{BolChe:20,DoiNes:21,DruPaq:19,LiNg:07}.  Note that the setting where $g$ is the identity function was intensively investigated in  large-scale optimization \cite{AttBol:09,Mai:15,Nes:13,NecNes:16}.  In this paper, we call this formulation \textit{simple} composite optimization.  When $g$ is restricted to be a Lipschitz convex function and $F$ smooth, a natural approach to this problem consists in linearizing the smooth part, leaving the nonsmooth term unchanged and adding an appropriate quadratic regularization term.  This is the approach considered e.g., in \cite{DruPaq:19,Pau:16}, leading to a proximal Gauss-Newton method, i.e.  given the current point  $\bar{x}$ and a regularization parameter $t>0$, solve at each iteration the subproblem:

\vspace{-0.6cm}

\begin{align*}
	x^{+}=\argmin\limits_{x} g\Big(F(\bar{x}) + \nabla F(\bar{x})(x-\bar{x})\Big) + \frac{1}{2t}\norm{x-\bar{x}}^{2} + h(x).
\end{align*}

\vspace{-0.3cm}

\noindent For such a method it was proved in \cite{DruPaq:19} that  $\text{dist}(0,\partial f(x))$ converges to $0$ at a sublinear rate of order $\mathcal{O}(1/k^{\frac{1}{2}})$, where $k$ is the iteration counter, while  convergence of the iterates under KL inequality was recently shown in \cite{Pau:16}. Note that the case where $g$ is (Lipschitz) convex, $F$ is smooth and $h = 0$ has been also analysed in (\cite{Yua:85}) \cite{Fle:82}.  
%
%
%
\noindent In \cite{BolChe:20} a flexible method is proposed, where the smooth part $F$ is replaced  by its quadratic approximation,  i.e.,  given  $\bar{x}$, solve:
\vspace{-0.6cm}

\begin{align*}
	x^{+}=\argmin\limits_{x} g\left( F(\bar{x}) +\nabla F(\bar{x})(x-\bar{x}) + \frac{L}{2}\norm{x-\bar{x}}^{2} \right) + h(x), 
\end{align*}

\vspace{-0.3cm}

\noindent where  $L=(L_1,\cdots,L_m)^T$, with $L_i$ being  the Lipschitz constant of the gradient of $F_i$, for $i=1:m$. Assuming $F$, $g$ and $h$ are convex functions, and $g$ additionally is componentwise nondecreasing and Lipschitz, \cite{BolChe:20}  derives  sublinear convergence rate  of order  $\mathcal{O}(1/k)$ in function values. Finally,  in the  recent paper \cite{DoiNes:21},  a general composite minimization problem of the form:

\vspace{-0.6cm}

\begin{align*}
	\min\limits_{x\in\mathbb{E}}g\big(x,F(x)\big),
\end{align*}

\vspace{-0.3cm}

\noindent is considered, where $F=(F_{1},\cdots,F_{m})$, with $F_{i}$'s being convex and $p$-smooth functions on $\mathbb{E}$ and having the $p$-derivative Lipschitz, with $p\geq 1$ an integer constant. Under  these settings,  \cite{DoiNes:21} replaces the smooth part by its  Taylor approximation of order $p$ plus a proper regularization term, i.e., given  $\bar{x}$, solve the following subproblem: 

\vspace{-0.0cm}

\begin{align*}
	x^{+}=\argmin\limits_{x}g\left( x, T^{F}_{p}(x;\!\!\bar{x}) + \frac{L}{(p+1)!}\norm{x-\bar{x}}^{p+1}\right),
\end{align*}

\vspace{-0.3cm}

\noindent where $L=(L_1,\cdots,L_m)^T$, with $L_i$ being related to  the Lipchitz constant of the $p$-derivative of  $F_i$ and $T^{F}_{p}(x;\!\!\bar{x}) $ is the $p$-Taylor approximation of $F$ around the current point $\bar{x}$.  For such a higher-order method, in the convex settings  and assuming that $g$ has full domain in the second argument, \cite{DoiNes:21} derives a sublinear convergence rate in function values of order $\mathcal{O}\left(1/k^p\right)$. 

\medskip

\noindent Note that the optimization scheme in \cite{DoiNes:21} belongs to the class of  \textit{higher-order methods}. Such methods are popular due to their performance in dealing with ill conditioning and fast rates of convergence, see e.g.,    \cite{BriGar:17,carGo17,GraNes:19,Nes:20,Nes:20Inxt,NecDan:20}.   For example,  first-order methods achieve convergence rates of order $\mathcal{O}(1/k)$ for smooth convex optimization \cite{Mai:15,Nes:13}, while   higher-order methods of order $p >1$ have converge rates $\mathcal{O}(1/k^p)$  for minimizing  $p$ smooth convex objective functions \cite{GraNes:19,Nes:20,Nes:20Inxt,NecDan:20}. Accelerated variants of higher-order methods  were also developed e.g., in \cite{Gas:19,Nes:20,Nes:20Inxt}.  Recently, \cite{NecDan:20} provided a unified framework for the convergence  analysis of higher-order optimization algorithms for solving simple  composite optimization problems using the \textit{majorization-minimization} approach. This is a technique that approximate an objective function by a majorization function, which can be minimized in closed form or its solution computed fast, yielding a solution or some acceptable improvement.  Note that papers such as \cite{BolChe:20,Mai:15} use a first-order majorization-minimization approach to build a model (i.e., use only  gradient information), while \cite{NecDan:20} uses higher-order derivatives to build such a model. \textit{However, global complexity bounds for higher-order methods based on the majorization-minimization principle for  solving composite  problem \eqref{eq:optpb} are not yet given.}  This is the goal of this work.

\medskip

\noindent\textbf{Contributions.} In this paper, we provide an algorithmic framework based on the notion of higher-order upper bound approximation of the composite problem \eqref{eq:optpb}. Note that in this optimization formulation we consider general properties for our objects, e.g.,  the functions  $F$ and $h$ can be smooth or nonsmooh, convex or nonconvex and $g$ is convex, nondecreasing  and has full domain.  Our framework  {consists} of replacing $F$ by a higher-order surrogate,  leading to a \textit{General Composite Higher-Order} minimization algorithm, which we call GCHO. This majorization minimization approach is relevant as it yields an array of algorithms, each of which is associated with the specific properties  of $F$ and the corresponding surrogate, and it provides a unified convergence analysis.  Note that most of our variants of GCHO  for $p> 1$ were not explicitly considered in the literature before (at least in the nonconvex settings). 
Moreover, our new first-, second-, and third-order methods can be implemented in practice using existing efficient  techniques from  e.g., \cite{Nes:20Inxt,WacBie:06}.  

\medskip 

\noindent We derive convergence guarantees for the GCHO algorithm when the upper bound approximate $F$ from the objective function up to an error that is $p \geq 1$ times differentiable and has a Lipschitz continuous $p$ derivative; we call such upper bounds composite higher-order surrogate functions. More precisely,  on composite (possibly nonsmooth) nonconvex problems we prove for GCHO,  with the help of a  new auxiliary sequence,   convergence rates $\mathcal{O}\left(\frac{1}{k^{p/(p+1)}}\right)$ in terms of first-order optimality conditions. We also characterize the convergence rate of GCHO algorithm locally, in terms of function values, under the Kurdyka-Lojasiewicz (KL) property. Our result show that the convergence behavior of GCHO ranges from sublinear to linear depending on the parameter of the underlying KL geometry. Moreover, on general (possibly nonsmooth) composite convex problems (i.e.,  $f$ is convex function) our algorithm achieves global sublinear convergence rate of order $\mathcal{O}\left(1/k^{p}\right)$ in function values. We summaries our convergence results in Table \ref{Tabl:result}. Finally, for $p=2$, $g(\cdot) = \max(\cdot)$ \blue{and $h = 0$}, we show that the subproblem, even in the nonconvex case, is equivalent to minimizing an explicitly written convex function over a convex set that can be solve using efficient convex optimization tools.

\medskip

\noindent Besides providing a general framework for the design and analysis of composite higher-order methods, in special cases, where complexity bounds are known for some particular  algorithms, our convergence results recover the existing bounds.  For example, from our convergence analysis one can easily  recover the convergence bounds  of higher-order algorithms  from \cite{Nes:20} for unconstrained minimization and  from  \cite{Nes:20,Nes:20Inxt,NecDan:20} for simple composite minimization. Furthermore,  in the composite \textit{convex} case we recover the convergence bounds from  \cite{BolChe:20} for $p=1$ and particular choices of $g$ and  from   \cite{DoiNes:21} for $p \geq 1$. To the best of our knowledge, this is the first complete work  dealing with composite problems  in the nonconvex and nonsmooth settings,  and  explicitly deriving convergence bounds for higher-order majorization-minimization algorithms  (including  convergence under KL). 

\begin{table}
\caption{Convergence results for GCHO algorithm.}
\begin{center}
\begin{tabular}{|l|l|l|l|}
\hline
\multirow{2}{*}{nonconvex case} & Ass. \ref{ass:fun} and \ref{cond:2}    & \begin{tabular}[c]{@{}l@{}}convergence rate: \\ $\exists (y_k)_{k\geq 0}$ close to $(x_k)_{k\geq 0}$ s.t. \\ $\min\limits_{j=0:k} \text{dist}(0, \partial f(y_{j}))\leq \mathcal{O}\left(k^{-\frac{p}{p+1}}\right)$\end{tabular} & Theorem \ref{th:MHO} \\ \cline{2-4} 
                                 & \begin{tabular}[c]{@{}l@{}} Ass. \ref{ass:fun},\ref{cond:2}, \ref{cond:3} \& KL \end{tabular} & \begin{tabular}[c]{@{}l@{}}convergence rate:\\$f(x_{k})\to {f_*}$ sublinear or linear \end{tabular} & Theorem \ref{Thm:kl} \\ \hline
convex case                      & \begin{tabular}[c]{@{}l@{}} Ass. \ref{ass:fun} \& \\   $f$ convex \end{tabular}         & \begin{tabular}[c]{@{}l@{}}convergence rate:\\$f(x_k) - f^*\leq \mathcal{O}\left(k^{-p}\right)$ \end{tabular}  & Theorem \ref{Theorem:convex} \\ \hline
\end{tabular}
\label{Tabl:result}
\end{center}
\end{table}

\medskip 



\section{Notations and  preliminaries}\label{sec2}
We use the standard notations from \cite{Nes:20,Nes:20Inxt}. We denote a finite-dimensional real vector space with $\mathbb{E}$ and $\mathbb{E}^{*}$ its dual space composed of linear functions on $\mathbb{E}$. For any linear function $s\in\mathbb{E}^{*}$, the value of $s$ at point $x\in\mathbb{E}$ is denoted by $\langle s,x\rangle$.  Using a self-adjoint positive-definite operator ${B}:\mathbb{E}\rightarrow \mathbb{E}^{*}$, we endow these spaces with conjugate Euclidean~norms:

\begin{align*}
\norm{x}=\langle {B}x,x\rangle^{{\frac{1}{2}}},\quad x\in \mathbb{E},\qquad \norm{g}_{*}=\langle g,{B}^{-1}g\rangle^{\frac{1}{2}},\quad g\in \mathbb{E}^{*}.
\end{align*}

\noindent For a twice differentiable function $\phi$ on a convex and open domain $\text{dom}\,\phi \subseteq \mathbb{E}$, we denote by $\nabla \phi(x)$ and $\nabla^{2} \phi(x)$ its gradient and hessian evaluated at  $x\in \text{dom}\,\phi$, respectively. Then, $\nabla \phi(x)\in\mathbb{E}^{*}$ and $\nabla^{2}\phi(x)h\in\mathbb{E}^{*}$ for all $x\in\text{dom}\,\phi$, $h\in\mathbb{E}$.  Throughout the paper, we consider $p$ a positive integer.  In what follows, we often work with directional derivatives of function $\phi$ at $x$ along directions $h_{i}\in \mathbb{E}$ of order $p$, ${D^{p}} \phi (x)[h_{1},\cdots,h_{p}]$, with $i=1:p$.  If all the directions $h_{1},\cdots,h_{p}$ are the same, we use the notation
$ {D^{p}} \phi(x)[h]$, for $h\in\mathbb{E}.$
Note that ${D^{p}} \phi (x)$ is a symmetric $p$-linear form. Then, its norm is defined as:

\vspace{-0.6cm}

\begin{align*}
\norm{{D^{p}} \phi (x)}= \max\limits_{h\in\mathbb{E}} \left\lbrace {D^{p}} \phi (x)[h]^{p} :\norm{h}\leq 1 \right\rbrace.     
\end{align*}

\vspace{-0.4cm}

\noindent Further, the Taylor approximation of order $p$ of $\phi$ at $x\in \text{dom}\,\phi$ is denoted:

\vspace{-0.7cm}

\begin{align*}
        T_p^{\phi}(y;\!\! x)= \phi(x) + \sum_{i=1}^{p} \frac{1}{i !} {D^{i}} \phi(x)[y-x]^{i} \quad \forall y \in \mathbb{E}.
\end{align*}
    
\vspace{-0.4cm}
    
\noindent If $\phi: \mathbb{E}\to \bar{\rset}$ is $p$ differentiable function on $\text{dom}\,\phi$, then the $p$th derivative is Lipschitz continuous if there exist a constant $L_p^{\phi} > 0$ such that:

\vspace{-0.2cm}

   	\begin{equation} \label{eq:001}
    	\| {D^p} \phi(x) - {D^p} \phi(y) \| \leq L_p^{\phi} \| x-y \| \quad  \forall x,y \in \text{dom}\,\phi.
   	\end{equation}

\noindent It is known that if \eqref{eq:001} holds,  then the residual between the function  and its Taylor approximation can be bounded \cite{Nes:20}:

\vspace{-0.4cm}
    \begin{equation}\label{eq:TayAppBound}
    \vert \phi(y) - T_p^{\phi}(y;\!\!x) \vert \leq  \frac{L_p^{\phi}}{(p+1)!} \norm{y-x}^{p+1} \quad  \forall x,y \in \text{dom}\,\phi.
    \end{equation}

\vspace{-0.05cm}
  
\noindent  If $p \geq 2$, we also have the following inequalities valid for all $ x,y \in \text{dom}\,\phi$:

\vspace{-0.7cm}

    \begin{align} \label{eq:TayAppG1}
    &\| \nabla \phi(y) - \nabla T_p^{\phi}(y;\!\!x) \|_* \leq \frac{L_p^{\phi}}{p!} \|y-x \|^p, \\
    \label{eq:TayAppG2}
    &\| \nabla^2 \phi(y) - \nabla^2 T_p^{\phi}(y;\!\!x) \| \leq \frac{L_p^{\phi}}{(p-1)!} \| y-x\|^{p-1}.
    \end{align}	
    
    \vspace{-0.3cm}

%
%
\noindent \noindent \red{For the Hessian, the norm defined in \eqref{eq:TayAppG2} corresponds to the spectral norm of self-adjoint linear operator (maximal module of all eigenvalues computed w.r.t. $B$)}. 
\noindent A function $g:\rset^{m} \to \rset$ is said to be nondecreasing if for all $i=1:m$, $g$ is nondecreasing in its $i$th argument, i.e., the univariate function:

\vspace{-0.8cm}

\begin{align*}
    z \mapsto g(z_{1},\cdots,z_{i-1},z,z_{i+1},\cdots,z_{m}),
\end{align*}

\vspace{-0.3cm}

\noindent is nondecreasing. In what follows, if $x$ and $y$ are in $\rset^{m}$, then $x\geq y$ means that $x_{i}\geq y_{i}$ for all $i=1:m$. Similarly, we define $x>y$. \red{Since $g$ is nondecreasing, then for all $x,y\in\mathbb{R}^m$ such that $x\leq y$ we have $g(x)\leq g(y)$}.  Next, we provide few definitions and properties concerning subdifferential calculs (see \cite{Mor:06}).

\vspace{-0.3cm}

\begin{definition}
\textbf{(Subdifferential)}: Let $f: \mathbb{E} \to \bar{\mathbb{R}}$ be a proper lower semicontinuous function. For a given $x \in \text{dom} \; f$, the regular subdifferential of $f$ at $x$, written $\widehat{\partial}f(x)$, is the set of all vectors $g_{x}\in\mathbb{E}^{*}$ satisfying:
\begin{equation*}
\lim_{x\neq y}\inf\limits_{y\to x}\frac{f(y) - f(x) - \langle g_{x}, y - x\rangle}{\norm{x-y}}\geq 0.
\end{equation*}
When $x \notin\text{dom} \; f$, we set $\widehat{\partial} f(x) = \emptyset$. The limiting-subdifferential, or simply the subdifferential, of $f$ at $x\in \text{dom} \, f$, written $\partial f(x)$, is defined as \cite{Mor:06}:

\vspace{-0.3cm}

\begin{align*}
\partial f(x):= \left\{ g_{x}\in \mathbb{E}^{*}\!\!: \exists x^{k}\to x, f(x^{k})\to f(x) \; \text{and} \; \exists g_{x}^{k}\in\widehat{\partial} f(x^{k}) \;\; \text{such that} \;\;  g_{x}^{k} \to g_{x}\right\}. 
\end{align*} 
\end{definition}

\vspace{-0.6cm}

\noindent Note that we have $\widehat{\partial}f(x)\subseteq\partial f(x)$  for each $x\in\text{dom}\,f$. In the previous inclusion, the first set is closed and convex while the second one is closed, see e.g.,   \cite{Mor:06}. { For the composite function $g(F)$, where $g$ is proper lower semicontinuous, convex and regular at $F(x)$, and $F=(F_{1},\cdots,F_{m})$, with $F_{i}$'s differentiable, we have the following chain rule: 
\begin{align}\label{eq:ch_rl}
\partial (g\circ F)(x) =  \nabla F(x)^T \partial g(F(x)).
\end{align} 
\noindent Further, given $f = f_1 + f_2$, where $f_i$'s are proper lsc functions either both convex or $f_1$ locally Lipschitz, then for all $x\in\text{dom}\, f$ we have: 
$$\partial f(x)\subseteq \partial f_1(x) + \partial f_2(x).$$
\noindent The equality holds e.g., if each convex $f_i$, for $i=1:2$, is also regular at $x$ or if $f_1$ is strictly differentiable.}
For any $x\in \text{dom} \; f$ let us define:

\vspace{-0.6cm}

\begin{align*}
S_{f}(x)=\text{dist}\big(0,\partial f(x)\big):=\inf\limits_{g_{x}\in\partial f(x)}\norm{g_{x}}.
\end{align*}

\vspace{-0.4cm}

\noindent If $\partial f(x) = \emptyset$, we set $S_f(x) = \infty$. 



\section{General composite higher-order algorithm}\label{sec3}
In this section, we propose a higher-order algorithm for solving the general composite problem \eqref{eq:optpb} and analyse its convergence.

\vspace{-0.3cm}

\begin{assumption}\label{ass:fun}
\red{We consider the following assumptions for optimization problem \eqref{eq:optpb}:}
\begin{enumerate}
\item[1.] The functions $F_{i}$, with $i=1\!:\!m$, $g$ and $h$ are proper lower semicontinuous on their domains, satisfy the chain rule \eqref{eq:ch_rl} \red{and $\text{dom}\, h \subseteq g(\text{dom}\,F)$.} 
\item[2.] Additionally, $g$ is convex nondecreasing and has full domain, satisfying the following subhomogeneity property:

\vspace{-0.3cm}

\begin{equation}\label{eq:g}
 g(\alpha x)\leq\alpha g(x)\quad\forall x\in \mathbb{R}^m \quad \forall\alpha \geq 1.
\end{equation}

\item[3.]  Problem \eqref{eq:optpb} has a solution and thus $f^{*}:=\!\inf_{x\in\text{dom}\,f}\! f(x)\!>\!-\infty$. 
\end{enumerate}
\end{assumption}

\vspace{-0.3cm}

\noindent \red{From Assumption \ref{ass:fun}.1, it follows that $\text{dom}\,f = \text{dom}\,h$}. Moreover, if Assumption \ref{ass:fun}.2 holds, then from  \cite{DoiNes:21}(Theorem 4) it follows that:

\vspace{-0.2cm}

\begin{equation}\label{Lemma:1}
g(x+ty)\leq g(x) + tg(y) \quad \forall t\geq 0.
\end{equation}


$
$

\vspace{-0.3cm}



\noindent Next, we provide several examples of optimization problems that can be written as \eqref{eq:optpb} and satisfy our Assumption~\ref{ass:fun}.

\medskip

\vspace{-0.6cm}

\begin{example}\label{expl:3}(Minimax strategies for nonlinear games)
Let us consider the  problem:
\begin{align*}
\min_{x\in\triangle_n} \left\lbrace f(x): = \max_{u\in\triangle_m}\langle F(x),u\rangle \right\rbrace,
\end{align*}
where $\triangle_n$, $\triangle_m$ are the standard simplexes  in $\mathbb{R}^n$ and $\mathbb{R}^m$, respectively. The smooth approximation for this problem using the entropy distance is as follows \cite{Nes:05}:
\begin{align*}
 \min_{x\in\triangle_n} \left\lbrace f_\mu(x) :=  \max_{u\in\triangle_m}\Big\lbrace \langle F(x),u\rangle  - \mu\sum_{j=1}^{m}u_j\ln(u_j) - \mu\ln(m)\Big\rbrace\right\rbrace,
\end{align*}
for some $\mu>0$. Using Lemma 4 in \cite{Nes:05}, we get:
\begin{align*}
f_\mu(x) = \mu\ln\left(\sum_{j=1}^{m}e^{\frac{F_i(x)}{\mu}}\right).
\end{align*}
\noindent Hence,  considering $g(y) = \mu\ln\left(\sum_{j=1}^{m}e^{\frac{y_i}{\mu}}\right)$, then the original minimax problem can be approximated, for sufficiently small $\mu$,  with $\min_{x\in\Delta_n}f_\mu(x) := g(F(x))$. Note that $g$ satisfies Assumption \ref{ass:fun}.2.
\end{example}

\vspace{-1cm}

\begin{example}
\label{expl:4}(Min-max problems)
Let us consider the following min-max problem:
\begin{align*}
\min\limits_{x\in Q} \max\limits_{i=1:m}F_{i}(x).
\end{align*}
This type of problem is  classical in optimization. Note that if we define $g(y)=\max_{i=1:m}y_{i}$ and $h=\textbf{1}_{Q}$, then, the previous min-max problem can be written as problem \eqref{eq:optpb} and $g$ satisfies Assumption \ref{ass:fun}.2.
\end{example}
\vspace{-0.8cm}

\begin{example}
\label{expl:5}(Simple composite problems)
Let us consider the following simple composite minimization problem:

\vspace{-0.6cm}

\begin{align*}
\min\limits_{x\in\mathbb{R}^{n}} F_{0}(x)+h(x).
\end{align*} 

\vspace{-0.2cm}

\noindent Taking $F(x)=F_{0}(x)$ and $g$ the identity function, we can clearly see that $g\big(F(x)\big) + h(x)=F_{0}(x)+h(x)$. 
\end{example}

\vspace{-0.3cm}

\noindent Further, let us introduce  the  notion of a higher-order surrogate, see also   \cite{NecDan:20}.

\vspace{-0.4cm}

\begin{definition}\label{def:surg}
Let $\phi:\mathbb{E}\to\bar{\mathbb{R}}$ be a proper lower semicontinuous function and $x\in\text{dom}\,\phi$.  We call the function $s(\cdot\; ;\!x):  \mathbb{E} \to\bar{\mathbb{R}}$, with $\text{dom}\,  s(\cdot\; ;\!x)=\text{dom}\,\phi$, a $p$ higher-order surrogate of $\phi$ at $x$ if it has the following properties:
\item[(i)] the error function  
\begin{align}\label{sur:p2}
e(y;\!x)=s(y;\!x) - \phi(y), \; \text{with} \; y \in \text{dom}\ \phi, 
\end{align}
\noindent  is $p$ times differentiable  and the $p$ derivative is Lipschitz continuous with the Lipschitz constant $L_{p}^{e}$ on $\text{dom}\,\phi$.





\item[(ii)] the derivatives of the error function $e$ satisfy
\begin{align}\label{sur:p3}
\nabla^{i}e(x;\!\!x)=0 \quad \forall i=0:p, \; x\in\text{dom}\,\phi,
\end{align}  
and there exist a positive constant $R_{p}^{e}>0$ such that
\begin{align}\label{sur:p4}
e(y;\!\!x)\geq \frac{R_{p}^{e}}{(p+1)!}\norm{y-x}^{p+1}  \quad \forall x, y \in\text{dom}\,\phi.
\end{align}

\end{definition}

\vspace{-0.5cm}

\noindent Note that $\text{dom}\,e$ may be larger than  $\text{dom}\,\phi$ (in fact, in our examples below $\text{dom}\,e$ is equal to the whole space $\mathbb{E}$) and from \eqref{sur:p4} we have $s(y;x) \geq \phi(y)$ for all $x,y\in\text{dom}\,\phi$. Next, we give two nontrivial examples of higher-order surrogate functions, see  \cite{NecDan:20} for more examples.

\vspace{-0.4cm}

\begin{example}(Composite functions)
\label{exam:1}
Let $F_1:\mathbb{E}\to \mathbb{R}$ be $p$ times differentiable and the $p$ derivative be Lipschitz with constant $L_p^{\red{F_1}}$ and let $F_2:\mathbb{E} \to \bar{\mathbb{R}}$ be a proper closed function. Then, for the composite function $F=F_{1} + F_{2}$, where $\text{dom}\,F= \text{dom}\,F_2 $, one can consider the following $p$ higher-order surrogate function:
\begin{align*}
\red{s(y;\!\!x)}=T^{F_{1}}_{p}(y;\!\!x) + \frac{M_{p}}{(p+1)!}\norm{x-y}^{p+1} + F_2(y)\;\;\forall\;x,y\in\text{dom}\,F,
\end{align*}  

\vspace{-0.2cm}

\noindent where $M_{p}>L_{p}^{F_{1}}$.  Indeed, from the definition of the error function, we get:

\vspace{-0.4cm}

\begin{align}\label{eq:01}
\red{e(y;\!\!x)} =T^{F_{1}}_{p}(y;\!\!x) - F_{1}(y) + \frac{M_{p}}{(p+1)!}\norm{x-y}^{p+1}. 
\end{align}

\vspace{-0.2cm}

\noindent Thus $e(\cdot;\!x)$, with $\text{dom}\,e = \mathbb{E}$ and $\text{dom}\,F\subseteq \text{dom}\,e$, has the $p$ derivative  Lipschitz \red{with constant $L_p^{F_1} + M_p$ on $\text{dom}\,F$. Further, from the definition of the error function $e$, we have:
\begin{align*}
    \nabla ^{i} e(x;\!\!x) &= \nabla T^{F_1}_p(x;\!\!x) - \nabla^i F_1(x) = \nabla^i F_1(x) - \nabla^i F(x) = 0\;\; \forall i=1:p.
\end{align*}}
Moreover,  since $F_1$ has the $p$ derivative Lipschitz, it follows from \eqref{eq:TayAppBound} that:

\vspace{-0.4cm}

\begin{align*}   
T^{F_1}_{p}(y;\!\!x) - F_1 (y)\geq \frac{-L_{p}^{F_1}}{(p+1)!}\norm{x-y}^{p+1}.
 \end{align*}

 
\noindent Combining \red{this inequality with \eqref{eq:01}}, we get:
\begin{align}\label{eq:cd_adp}
\red{e(y;\!\!x)}\geq  \frac{M_{p} - L_{p}^{F_1}}{(p+1)!}\norm{x-y}^{p+1}.
\end{align}


\noindent Hence, the error function $e$ has $L_{p}^{e}=M_{p} + L^{F_1}_{p}$ and $R_{p}^{e}=M_{p}-L^{F_1}_{p}$.    
\end{example}

\vspace{-1cm}

\begin{example}(proximal higher-order)
\label{exam:2}	 
Let $F:\mathbb{E} \to \bar{\mathbb{R}}$ be a proper lower semicontinuous function. Then, we can consider the following higher-order surrogate function:
\begin{align*}
s(y;\!\!x)=F(y) + \frac{M_{r}}{(r+1)!}\norm{y-x}^{r+1},
\end{align*}

\vspace{-0.2cm}

\noindent where $r$ is a positive integer. Indeed, the error function is: 

\vspace{-0.3cm}

$$e(y;\!x)=s(y;\!\!x)-F(x)=\frac{M_{r}}{(r+1)!}\norm{y-x}^{r+1},$$

 \vspace{-0.1cm}

\noindent where $\text{dom}\,F\subseteq \text{dom}\,e = \mathbb{E}$. In this case, the error function $e$ has the $r$ derivative Lipschitz on $\text{dom}\,F$ with constants $L_{r}^{e}= M_{r}$ and $R_{r}^e=M_{r}$.
\end{example}

\vspace{-0.5cm}

\noindent In the following, we assume for problem \eqref{eq:optpb} that each function $F_{i}$, with $i=1:m$, admits a $p$ higher-order surrogate as in Definition \ref{def:surg}. Then, we propose the following General Composite Higher-Order algorithm, called GCHO.

\begin{center}
\noindent\fbox{%
\parbox{11.5cm}{%
\textbf{Algorithm GCHO}\\
Given $x_{0}\in\text{dom}\,f$. For $k\geq 1$ do:
\begin{enumerate}
\item[1.] Compute surrogate $s(x;\!\!x_{k})\!:=\big(s_{1}(x;\!\!x_{k}),\cdots,s_{m}(x;\!\!x_{k})\big)$ of $F$ near $x_{k}$.
\item[2.]Compute $x_{k+1} \in\text{dom}\,f$  satisfying the following descent:
\end{enumerate}
\begin{equation}\label{eq:loc}
g\big(s(x_{k+1};\!x_{k})\big) + h(x_{k+1})\leq f(x_{k}).
\end{equation}  
}%
}
\end{center}

\vspace{0.2 cm}
\noindent Although our algorithm requires that the next iterate  $x_{k+1}$ only  to satisfy the descent \eqref{eq:loc}, we usually generate $x_{k+1}$ by  solving the following subproblem:

\vspace{-0.6cm}

\begin{align}\label{eq:sub_prbm}
\min_{x} g\big(s(x;\!x_{k})\big) + h(x).
\end{align} 

\vspace{-0.3cm}

\noindent If $F$ and $h$ are convex functions, then the subproblem \eqref{eq:sub_prbm} can be also convex. Indeed, for Example \ref{exam:1}, if \red{$M_p \geq pL_p^{F_1}$ and $F_2$ is convex}, then the surrogate function $s$ is convex and hence the problem \eqref{eq:sub_prbm} is convex (see Theorem 1 \cite{Nes:20}), while for Example \eqref{exam:2}, the surrogate is convex if $M_p\geq 0$. \red{Hence, in the convex case we assume that $x_{k+1}$ is the global optimum of the subproblem \eqref{eq:sub_prbm}}. However, in the nonconvex case, we cannot \red{guarantee} the convexity of the subproblem. {In this case, we either assume that we can compute a stationary point of the subproblem \eqref{eq:sub_prbm} if $g$ is the identity function  or we can compute an inexact solution as defined in \eqref{eq:inx} if $g$ is a general function.} Note that our algorithmic framework is quite general and yields an array of algorithms, each of which is associated with the specific properties  of $F$ and the corresponding surrogate.  For example, if $F$ is a sum between a smooth term and a nonsmooth one we can use a surrogate as in Example \ref{exam:1}; if $F$ is fully nonsmooth we can use a surrogate as in Example \ref{exam:2}.  This is the first time such an analysis is performed, and most of our variants of GCHO were not explicitly considered in the literature before (especially in the nonconvex settings). Note that in both Examples \ref{exam:1} and \ref{exam:2}, $x_{k+1}$ can be computed  \textit{inexactly}, as detailed in the next sections.


\subsection{Nonconvex convergence analysis}\label{subsec1}
In this section we consider that each $F_{i}$, with $i=1:m$, and $h$ are nonconvex functions (possible nonsmooth). Then, problem \eqref{eq:optpb} becomes a pure nonconvex optimization problem. Now we are ready to analyze the convergence behavior of GCHO algorithm under these general settings.
In the sequel, we assume that $g(-R_{p}^{e})<0$. Note that since the vector  $R_{p}^{e}>0$, then for all the optimization problems considered in  Examples \ref{expl:3},  \ref{expl:4} and \ref{expl:5} this assumption holds provided that $M_p$ is large enough.

\vspace{-0.4cm}

\begin{theorem}\label{Th:ncx}
Let $F$, $g$ and $h$ satisfy Assumption \ref{ass:fun} and additionally each $F_{i}$ admits a $p$ higher-order surrogate $s_{i}$ as in Definition \ref{def:surg} with the constants $L^{e}_{p}(i)$ and  $R_{p}^{e}(i)$, for $i=1:m$. Let $\left(x_{k}\right)_{k\geq 0}$ be the sequence generated by Algorithm GCHO, $R_{p}^{e}=\big(R_{p}^{e}(1),\cdots,R_{p}^{e}(m)\big)$ and $L^{e}_{p}=\big(L^{e}_{p}(1),\cdots,L^{e}_{p}(m)\big)$. Then, the sequence $\left( f(x_{k})\right)_{k\geq 0}$ is nonincreasing and satisfies the following descent relation:
\begin{align}\label{eq:lem1}
f(x_{k+1})\leq f(x_{k})+\frac{g(-R_{p}^{e})}{(p+1)!}\norm{x_{k+1}-x_{k}}^{p+1}\qquad  \forall k\geq 0.
\end{align}
\end{theorem}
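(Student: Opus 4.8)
The plan is to combine the three defining properties of the higher-order surrogates with the monotonicity and the sublinearity inequality \eqref{Lemma:1} of $g$. The value we decrease from is $f(x_k)$, which is available because property \eqref{sur:p3} at $y=x_k$ forces each error to vanish, so $s(x_k;\!x_k)=F(x_k)$ and the subproblem is feasible at $x_k$ with objective $f(x_k)$; this is what makes the enforced descent \eqref{eq:loc} attainable. The substantive first step is to exploit the lower bound \eqref{sur:p4} on the error at the new point: since $e_i(x_{k+1};\!x_k)=s_i(x_{k+1};\!x_k)-F_i(x_{k+1})\geq \frac{R_p(i)}{(p+1)!}\norm{x_{k+1}-x_k}^{p+1}$ for every $i=1:m$, rearranging componentwise gives the vector inequality
\[
F(x_{k+1}) \leq s(x_{k+1};\!x_k) - \frac{\norm{x_{k+1}-x_k}^{p+1}}{(p+1)!}\, R_p.
\]

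Next I would apply $g$ to both sides. Since $g$ is nondecreasing (Assumption \ref{ass:fun}.2) this preserves the inequality, and since the right-hand side has the form $z+t(-R_p)$ with $z=s(x_{k+1};\!x_k)$ and scalar $t=\norm{x_{k+1}-x_k}^{p+1}/(p+1)!\geq 0$, the sublinearity inequality \eqref{Lemma:1} separates the regularizer linearly:
\[
g\big(F(x_{k+1})\big) \leq g\big(s(x_{k+1};\!x_k)\big) + \frac{g(-R_p)}{(p+1)!}\norm{x_{k+1}-x_k}^{p+1}.
\]
This is the crux: the error lower bound pushes the vector $R_p$ out of the argument of $F$, and \eqref{Lemma:1} converts it into the scalar multiple of $g(-R_p)$ appearing in the claim. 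Adding $h(x_{k+1})$ to both sides and then invoking the descent \eqref{eq:loc}, namely $g(s(x_{k+1};\!x_k))+h(x_{k+1})\leq f(x_k)$, yields exactly \eqref{eq:lem1}.

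Finally, to conclude that $\big(f(x_k)\big)_{k\geq 0}$ is nonincreasing it suffices to check $g(-R_p)\leq 0$: taking $\alpha=0$ in the scaling property \eqref{eq:g} gives $g(0)\leq 0$, and since each $R_p(i)>0$ we have $-R_p\leq 0$, so monotonicity gives $g(-R_p)\leq g(0)\leq 0$, making the added term nonpositive. The step requiring the most care is the applicability of \eqref{Lemma:1}: one must ensure $s(x_{k+1};\!x_k)$, $-R_p$, and the intermediate point $s(x_{k+1};\!x_k)-t R_p$ all lie in $\text{dom}\,g$. The intermediate point lies componentwise between $F(x_{k+1})$ and $s(x_{k+1};\!x_k)$ — both in $\text{dom}\,g$, since $x_{k+1}\in\text{dom}\,f$ and $s(x_{k+1};\!x_k)\geq F(x_{k+1})$ by \eqref{sur:p1} — so convexity of $\text{dom}\,g$ handles it, while membership of $-R_p$ is what is verified directly for the concrete $g$ of Examples \ref{expl:3}--\ref{expl:5}.
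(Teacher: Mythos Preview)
Your proof is correct and follows essentially the same approach as the paper's: both combine the error lower bound \eqref{sur:p4}, the monotonicity of $g$, the sublinearity property \eqref{Lemma:1}, and the enforced descent \eqref{eq:loc}. The only cosmetic difference is ordering: the paper first writes $g(s(x_{k+1};x_k))=g\big(F(x_{k+1})+e(x_{k+1};x_k)\big)$ and applies \eqref{Lemma:1} (with $t=1$) to separate $F$ from $e$, then uses monotonicity and the scaling \eqref{eq:g} to bound $g(-e)$ by $\tfrac{\|x_{k+1}-x_k\|^{p+1}}{(p+1)!}g(-R_p)$, whereas you apply monotonicity first and then \eqref{Lemma:1} with the scalar $t=\|x_{k+1}-x_k\|^{p+1}/(p+1)!$ directly; your route is slightly more streamlined and your extra remarks on $g(-R_p)\leq 0$ and the domain check for \eqref{Lemma:1} go beyond what the paper's proof spells out.
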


\vspace{-0.6cm}

\begin{proof}
Denote $e(x_{k+1};\!\! x_{k})=\big(e_{1}(x_{k+1};\!\!x_{k}),\cdots,e_{m}(x_{k+1};\!\!x_{k})\big)$. Then, from the definition of the error function $e$ and \eqref{sur:p4}, we have:

\vspace{-0.3cm}

\begin{align*} 
 \frac{\red{R_p^{e}}}{(p+1)!}\norm{x_{k+1} - x_k}^{p+1}\leq e(x_{k+1};x_k) = s(x_{k+1};x_k) - F(x_{k+1}).
\end{align*}

\noindent This implies that:

\vspace{-0.5cm}

\begin{align*}
F(x_{k+1}) \leq s(x_{k+1};x_k) - \frac{\red{R_p^{e}}}{(p+1)!}\norm{x_{k+1} - x_k}^{p+1}.
\end{align*}

\vspace{-0.2cm}

\noindent Since $g$ is nondecreasing, we get:
\begin{align*}
g(F(x_{k+1})) &\leq g\left(s(x_{k+1};x_k) - \frac{\red{R_p^{e}}}{(p+1)!}\norm{x_{k+1} - x_k}^{p+1}\right)\\ &\stackrel{\eqref{Lemma:1}}{\leq} g\big(s(x_{k+1};x_k)\big) +\frac{g(-\red{R_p^{e}})}{(p+1)!}\norm{x_{k+1} - x_k}^{p+1}.
\end{align*}

\vspace{-0.2cm}

\noindent Finally, we obtain that:

\vspace{-0.5cm}

\begin{align*}
f(x_{k+1})&\leq g\big(s(x_{k+1};x_k)) + h(x_{k+1}) + \frac{g(-\red{R_p^{e}})}{(p+1)!}\norm{x_{k+1} - x_k}^{p+1}\\
&\stackrel{\eqref{eq:loc}}{\leq} f(x_k) + \frac{g(-\red{R_p^{e}})}{(p+1)!}\norm{x_{k+1} - x_k}^{p+1},
\end{align*}
%
which yields our statement.
\end{proof}

\noindent Summing \eqref{eq:lem1} from $j=0$ to $k$, we get:
\begin{align*}
\sum_{j=0}^{k}-\frac{g(-R_{p}^{e})}{(p+1)!}\norm{x_{j+1}-x_{j}}^{p+1}&\leq \sum_{j=0}^{k} f(x_{j})-f(x_{j+1})\\&= f(x_{0})-f(x_{k+1})\leq f(x_{0})-f^{*}. 
\end{align*}

\vspace{-0.4cm}

\noindent Taking the limit as $k\to+\infty $, we obtain:

\vspace{-0.6cm}

\begin{align}
\sum\limits_{k=0}^{+\infty}\norm{x_{k}-x_{k+1}}^{p+1}< +\infty.
\end{align}

\vspace{-0.3cm}

\noindent Hence $\lim_{k\mapsto+\infty}\norm{x_{k}-x_{k+1}}=0$.
In our convergence analysis, we also consider the following additional assumption which requires the existence of some auxiliary sequence that must be closed to the sequence generated by GCHO algorithm and some first-order relation holds:

\vspace{-0.4cm}

\begin{assumption}\label{cond:2}
Given the sequence $\big(x_{k}\big)_{k\geq 0}$ generated by GCHO algorithm, there exist two constants  $L^{1}_{p}, L^{2}_{p}>0$ and a sequence $\left(y_{k}\right)_{k\geq 0}$ such that:
\begin{align}\label{stp:yk}
\norm{y_{k+1}\!-\!x_{k}}\!\leq \!L^{1}_{p}\norm{x_{k+1}\!-\!x_{k}} \;\text{and}\;\;
S_{f}(y_{k+1})\!\leq\! L^{2}_{p}\norm{y_{k+1}\!-\!x_{k}}^{p}  \;\;\;   \forall k\geq 0.
\end{align}
\end{assumption}

\vspace{-0.4cm}

\noindent In the next section, we provide concrete examples for the sequence $(y_{k})_{k\geq 0}$  satisfying \red{Assumption \ref{cond:2}}, and the corresponding expressions for $L_{p}^1$ and $L_{p}^2$.

  
\subsection{Approaching the set of stationary points}
\label{sec:artseq}
Before continuing with the convergence analysis of GCHO algorithm, let us analyze the relation between $\norm{x_{k+1}-x_{k}}^{p}$ and $S_{f}(x_{k+1})$ and also give examples when Assumption \ref{cond:2} is satisfied. For simplicity,  consider the following simple composite minimization problem:

\vspace{-0.6cm}

\begin{align*}
\min\limits_{x} f(x):=F(x)+h(x),
\end{align*}

\vspace{-0.3cm}

\noindent where $F$ is $p$ times differentiable function, having the $p$ derivative $L_{p}^{F}$-Lipschitz and $h$ is proper lower semicontinuous function. In this case $g$ is the identity function and we can take as a surrogate $s(y;\!x)=T_{p}^{F}(y;\!x) + \frac{M_{p}}{(p+1)!}\norm{x-y}^{p+1} + h(y)$, with the positive constant $M_{p}$ satisfying $M_{p} > L_{p}^{F}$ and $g(-R_p^e)<0$. The following lemma gives an example when Assumption \ref{cond:2} holds.

\vspace{-0.3cm}

\begin{lemma}\label{lem:1}
Assume $g$ is the identity function, $F$ has the $p$ derivative Lipschitz and $x_{k+1}$ is a stationary point of the following subproblem:
\begin{equation}\label{alg:MHO}
x_{k+1}\in\argmin\limits_{x} T_{p}^{F}(x;\!\!x_{k})+\dfrac{M_{p}}{(p+1)!}\norm{x-x_{k}}^{p+1} + h(x).
\end{equation}
Then, Assumption \ref{cond:2} holds with $y_{k+1}=x_{k+1}$, $L^{1}_{p}=1$ and $L^{2}_{p}=\frac{M_{p} + L_{p}^{F}}{p!}$.
\end{lemma}

\vspace{-0.6cm}

\begin{proof}
Since $x_{k+1}$ is a stationary point of  subproblem \eqref{alg:MHO}, using \eqref{eq:ch_rl}, we get:
\begin{align*}
\textcolor{black}{\dfrac{M_{p}}{p!}\norm{x_{k+1}-x_{k}}^{p-1}B (x_{k}-x_{k+1})- \nabla T_{p}^{F}(x_{k+1};\!x_{k})\in\partial h(x_{k+1}),}
\end{align*}
or equivalently
\begin{align*}
&\dfrac{M_{p}}{p!}\norm{x_{k+1}-x_{k}}^{p-1}B(x_{k}-x_{k+1})+\Big(\nabla F(x_{k+1})-\nabla T_{p}^{F}(x_{k+1};\!x_{k})\Big)\\
&\in \nabla F(x_{k+1})+\partial h(x_{k+1})=\partial f(x_{k+1}).
\end{align*}
Taking into account that $F$ is $p$-smooth, we further get:
\begin{align}\label{eq:eq0}
S_{f}(x_{k+1})&\leq \dfrac{M_{p}}{p!}\norm{x_{k+1}-x_{k}}^{p} + \norm{\nabla F(x_{k+1})-\nabla T_{p}^{F}(x_{k+1},x_{k})}_{\ast}\\
&\stackrel{\eqref{eq:TayAppG2}}{\leq} \dfrac{M_{p}+L_{p}^{F}}{p!}\norm{x_{k+1}-x_{k}}^{p}.\nonumber   
\end{align}  

\vspace{-0.3cm}

\noindent Hence, Assumption \ref{cond:2} \red{holds with $y_{k+1} = x_{k+1}$, $L^{1}_{p}=1$ and $L^{2}_{p}=\frac{M_{p} + L_{p}^{F}}{p!}$ }.
\end{proof}


\noindent The algorithm GCHO which generates a sequence $(x_k)_{k \geq 0}$  satisfying the descent \eqref{eq:loc} and the stationary condition \eqref{alg:MHO} has been also considered e.g., in the recent papers \cite{NecDan:20,Nes:20Inxt}, with $h$ assumed to be a convex function.  Here we remove this assumption on $h$.
\noindent Combining \eqref{eq:eq0} and \eqref{eq:lem1}, we further obtain:

\vspace{-0.6cm}

\begin{align*}
S_{f}(x_{k+1})^{\frac{p+1}{p}}&\leq \left(\dfrac{M_{p}+L_{p}^{F}}{p!}\right)^{\frac{p+1}{p}}\dfrac{(p+1)!}{M_{p}-L_{p}^{F}}\Big(f(x_{k})-f(x_{k+1})\Big)\\
&=C_{M_{p},L_{p}^{F}}\Big(f(x_{k})-f(x_{k+1})\Big),
\end{align*} 

\vspace{-0.4cm}

\noindent where $C_{M_{p},L_{p}^{F}}=\left(\dfrac{M_{p}+L_{p}^{F}}{p!}\right)^{\frac{p+1}{p}}\dfrac{(p+1)!}{M_{p}-L_{p}^{F}}$ . Summing the last inequality from $j=0:k-1$, and using that $f$ is bounded from bellow by $f^{*}$, we get:

\vspace{-0.6cm}

\begin{align*}
\sum^{k-1}_{j=0}S_{f}(x_{j})^{\frac{p+1}{p}}&\leq C_{M_{p},L_{p}^{F}}\Big(f(x_{0})-f(x_{k})\Big)                                      \leq C_{M_{p},L_{p}^{F}}\Big(f(x_{0})-f^{*}\Big). 
\end{align*}

\vspace{-0.3cm}

\noindent Hence:

\vspace{-1.2cm}

\begin{align*}
\min_{j=0:k-1}S_{f}(x_{j})\leq  \dfrac{\left(C_{M_{p},L_{p}^{F}}(f(x_{0})-f^{*})\right)^{\frac{p}{p+1}}}{k^{\frac{p}{p+1}}}.
\end{align*}

\vspace{-0.3cm}

\noindent Thus, we have proved convergence for the simple composite problem under slightly more general assumptions than in \cite{NecDan:20,Nes:20Inxt}, i.e., $F$ and $h$ are possibly nonconvex functions. 
\red{Finally, if we have $ \norm{x_{k+1}-x_{k}}^{p} \leq \frac{p!}{L^F_{p}+M_{p}}\epsilon$, then   from \eqref{eq:eq0} it follows that $S_f (x_{k+1})\leq \epsilon $, i.e.,  $x_{k+1}$ is nearly stationary for $f$}.
Note that in the previous Lemma \ref{lem:1}, we assume $x_{k+1}$ to be a stationary point of the following subproblem (see \eqref{alg:MHO}):

\vspace{-0.9cm}

\begin{align}\label{eq:op_cond}
x_{k+1} \in \argmin_{x} s(x;\!x_{k}).
\end{align}

\vspace{-0.3cm}

\noindent However, our stationary condition for $x_{k+1}$ can be relaxed to the \textit{following inexact} optimality criterion (\red{see also \cite{BriGar:17}}):

\vspace{-0.6cm}

\begin{align}\label{eq:inx_exmp}
\norm{g_{x_{k+1}}}\leq \theta\norm{x_{k+1}-x_{k}}^{p},
\end{align}

\vspace{-0.2cm}

\noindent where $g_{x_{k+1}}\in\partial s(x_{k+1};\!x_{k}) $ and $\theta>0$. For simplicity of the exposition, in our convergence analysis below for this particular case (i.e., $g$ identity function) we assume however that $x_{k+1}$ satisfies the exact stationary condition \eqref{eq:op_cond}, although our results can be extended to the \textit{inexact} stationary condition from above. The situation is dramatically different for the general composite problem \eqref{eq:optpb}. When $g$ is nonsmooth, the distance $\text{dist}\big(0,\partial f(x_{k+1})\big)$ will typically not even tend to zero in the limit, although we have seen that $\norm{x_{k+1}-x_{k}}^{p}$ converges to zero. Indeed, consider the  minimization of the following function: 

\vspace{-0.3cm}

$$f(x)=\max\big(x^{2}-1,1-x^{2}\big).$$

\noindent For $p=1$, we have $L_{1}^{F}(1)=L_{1}^{F}(2)=2$. Taking $x_{0}>1$ and $M_{1}=M_{2}=4$, GCHO algorithm becomes:

\vspace{-0.7cm}

\begin{align*}
x_{k+1}=\argmin\limits_{x} Q(x,x_{k})\;\Big(:=\max\big( Q_{1}(x,x_{k}) , Q_{1}(x,x_{k}) - 4xx_{k}+2x_{k}^2 + 2\big)\Big),
\end{align*}

\vspace{-0.4cm}

\noindent where $Q_{1}(x,x_{k})=2x^{2} - 2xx_{k}+ x_{k}^{2}-1 $. Let us prove by induction that $x_{k}> 1$ for all $k\geq 0$. Assume that $x_{k}>1$ for some $k\geq 0$.  
We notice that the polynomials $Q_{2}(x,x_{k}):=Q_{1}(x,x_{k}) - 4xx_{k}+2x_{k}^2 + 2$ and $Q_{1}(x,x_{k})$ are 2-strongly convex functions and they intersect in a unique point $\bar{x}=\frac{x_{k}^2+1}{2x_{k}}$. Also, the minimum of $Q_{2}$ is $\bar x_{2}=\frac{3}{2} x_{k}$ and the minimum of $Q_{1}$ is $\bar x_{1}:=\frac{1}{2}x_{k}$, satisfying $\bar x_{1}\leq \bar{x}\leq  \bar x_{2}$. Let us prove that $x_{k+1}=\bar{x}$. Indeed, if $x\leq \bar{x}$, then $Q(x,x_{k}) = Q_{2}(x,x_{k})$ and it is nonincreasing on $(-\infty,\bar{x}]$. Hence, $Q(x,x_{k})\geq Q(\bar{x},x_{k})$ for all $x\leq \bar{x}$. Further, if $x\geq \bar{x}$, then $Q(x,x_{k}) = Q_{1}(x,x_{k})$ and it is nondecreasing on $ [\bar{x},+\infty)$. In conclusion, $Q(x,x_{k})\geq Q(\bar{x},x_{k})$ for all $x\leq \bar{x}$. Finally, we have that: $Q(x,x_{k})\geq Q(\bar{x},x_{k})$ for all $x\in\mathbb{R}$. Since $x_{k}>1$, we also get that $x_{k+1}=\frac{x_{k}^2+1}{2x_{k}} > 1$. Since $x_{k}>1$, then $\partial f(x_{k})=2x_{k} > 2 $ and $S_{f}(x_{k})\geq 2 > 0$. Moreover, $x_{k+1} < x_{k}$ and bounded below by 1, thus $(x_{k})_{k\geq 0}$ is convergent and its limit is $1$. Indeed, assume that $x_{k}\to\hat{x}$ as $k \to \infty$. Then, we get $ \hat{x} = \frac{\hat{x}^{2} + 1}{2\hat{x}}$ and thus $\hat{x} = 1$ (recall that $\hat{x}\geq 1$). Consequently, $\norm{x_{k+1} - x_{k}}$ also converges to 0. Therefore, we must look elsewhere for a connection between $S_{f}(\cdot)$ and $\norm{x_{k+1} - x_{k}}^{p}$.

\medskip 

\noindent  Let us now consider the following subproblem:

\vspace{-0.6cm}

\begin{align}\label{eq:prox_ho}
 \mathcal{P}(x_{k})=\argmin\limits_{y} \mathcal{M}_{p}(y,x):= f(y)+\dfrac{\mu_{p}}{(p+1)!}\norm{y-x_k}^{p+1}, 
\end{align}   

\vspace{-0.3cm}

\noindent where $\mu_{p}>g(L^{e}_{p})$. Since $f$ is assumed bounded from bellow, then for any fixed  $x$, the function $y\mapsto \mathcal{M}_{p}(y,x)$ is coercive and hence the optimal value $\mathcal{M}_{p}^* =\inf\limits_{y}\mathcal{M}_{p}(y,x)$ is finite. Then,  the subproblem \eqref{eq:prox_ho} is equivalent to:

\vspace{-0.6cm}

\begin{align*}
\inf\limits_{y\in \mathcal{B}_k} f(y)+\dfrac{\mu_{p}}{(p+1)!}\norm{y-x_{k}}^{p+1}, 
\end{align*}

\vspace{-0.3cm}

\noindent for some   compact set $\mathcal{B}_k$.  Since $\mathcal{M}_{p}$ is proper lower semicontinuous function  in the first argument and $\mathcal{B}_k$ is compact set, then from Weierstrass theorem we have that the infimum $\mathcal{M}_{p}^*$ is attained, i.e., there exists  $\bar{y}_{k+1} \in \mathcal{P}(x_{k})$ such that  $\mathcal{M}_{p}(\bar{y}_{k+1},x_{k}) = \mathcal{M}_{p}^*$.  Since the level sets of $y \mapsto \mathcal{M}_{p}(x,y)$ are compact, then $ \mathcal{P}(x_{k})$ is nonempty and compact and one can consider the following point: 

\vspace{-0.8cm}

\begin{align}\label{stp:prx}
y_{k+1} =\argmin_{y\in \mathcal{P}(x_{k}) } \norm{y-x_{k}}.
\end{align}

\vspace{-0.3cm}
\noindent  Let us assume that $F_{i}$ admits a higher-order surrogate as in Definition \ref{def:surg}, where the error functions $e_{i}$ are $p$ smooth with Lipschitz constants $L_{p}^{e}(i)$ for all $i=1:m$. Denote $L_{p}^{e}=\big(L_{p}^{e}(1),\cdots,L_{p}^{e}(m)\big)$ and define the following positive constant $C_{L^{e}_{p}}^{\mu_{p}}= \dfrac{\mu_{p}}{\mu_{p} - g(L^{e}_{p})}$ \big(recall that $\mu_{p}$ is chosen such that $\mu_{p}>g(L^{e}_{p})$\big). {Next lemma shows that Assumption \ref{cond:2} holds provided that we compute $x_{k+1}$ as an approximate local solution of subproblem \eqref{eq:sub_prbm} (hence, $x_{k+1}$ doesn't need to be global optimum)} and $y_{k+1}$ as in \eqref{stp:prx}. 

\vspace{-0.5cm}

\begin{lemma}\label{res:lem}
Let the assumptions of Theorem \ref{Th:ncx} hold and additionally there exists $\delta>0$ such that $x_{k+1}$ satisfies the following inexact optimality condition:
\begin{align}\label{eq:inx}
g\big(s(x_{k+1};\!\!x_k)\big) + h(x_{k+1}) - \!\!\!\min_{\red{ {x: \; \|x - x_k\|\leq D_k }}}  & \left(g\big(s(x;\!\!x_k)\big) + h(x) \right) \\ 
& \qquad \leq \delta\norm{x_{k+1}-x_k}^{p+1}, \nonumber 
\end{align}

\vspace{-0.3cm}

 \noindent \red{where $D_k:= \left(\frac{(p+1)!}{\mu_p}(f(x_k) - f^*)\right)^{\frac{1}{p+1}}$}. Then,  Assumption \ref{cond:2} holds with $y_{k+1}$ given in \eqref{stp:prx}, $L^{1}_{p}=\left(C_{L^{e}_{p}}^{\mu_{p}} + \frac{\red{\delta(p+1)!}}{\mu_p - g(L^{e}_p)}\right)^{1/(p+1)}$ and $L^{2}_{p}=\frac{\mu_{p}}{p!}$.
\end{lemma}

\vspace{-0.5cm}

\begin{proof}
From the definition of $y_{k+1}$ in \eqref{stp:prx}, we have:
\begin{align}\label{eq:011}
f(y_{k+1})+\frac{\mu_{p}}{(p+1)!}\norm{y_{k+1}-x_{k}}^{p+1} &= \min\limits_{y} f(y) + \frac{\mu_{p}}{(p+1)!}\norm{y-x_{k}}^{p+1}\\ \nonumber
        &\leq f(x_{k+1}) + \frac{\mu_{p}}{(p+1)!}\norm{x_{k+1}-x_{k}}^{p+1}.
\end{align}
\red{Further, taking $y=x_k$ in \eqref{eq:011} we also have:
\begin{align*}
    f(y_{k+1}) + \frac{\mu_{p}}{(p+1)!}\norm{y_{k+1}-x_{k}}^{p+1} \leq f(x_k),
\end{align*}
which implies that:
\vspace{-0.3cm}
\begin{align}\label{eq:bdy}
\|y_{k+1} - x_k\|\leq  \left(\frac{(p+1)!}{\mu_p}(f(x_k) - f^*)\right)^{\frac{1}{p+1}} = D_k.    
\end{align}}
Note that since the error functions $e_{i}$'s have the $p$ derivative Lipschitz with constants $L_{p}^{e}(i)$'s, then using \eqref{eq:TayAppBound}, we get:
\begin{align*}
    \vert e_{i}(y;\!\!x_{k}) - T_p^{e_{i}}(y;\!\!x_{k}) \vert \leq  \frac{L_p^{e}(i)}{(p+1)!} \norm{y-x_{k}}^{p+1} \quad \forall i=1:m,\quad \forall y\in\text{dom}\,e_i.
\end{align*}
\red{From \eqref{sur:p3}}, the Taylor approximations of $e_{i}$'s of order $p$ at $x_{k}$, $T_p^{e}(y;\!\!x_{k})$, are zero. Hence we get:

\vspace{-0.4cm}

\begin{align}\label{eq:qq1}
    \vert s_{i}(y;\!\!x_{k}) - F_{i}(y) \vert=\vert e_{i}(y;\!\!x_{k}) \vert \leq  \frac{L_p^{e}(i)}{(p+1)!} \norm{y-x_{k}}^{p+1} \quad \forall i=1:m.
\end{align}
Further, since $F(x_{k+1})\leq s(x_{k+1};\!\!x_{k})$ (\red{see \eqref{sur:p4}}) and $g$ is a nondecreasing function,  we have:
\begin{align*}
f(x_{k+1}) &\leq g \Big(s(x_{k+1};\!\!x_{k})\Big) + h(x_{k+1})\\ 
           &\stackrel{\eqref{eq:inx}}{\leq} \min_{\red{ y:\;\|y - x_k\|\leq D_k }} g \Big(s(y;\!\!x_{k})\Big) + h(y) + \delta\norm{x_{k+1} - x_k}^{p+1}\\
           &\stackrel{\eqref{eq:qq1}}{\leq} \min_{\red{y:\;\|y - x_k\|\leq D_k}}  g\left(F(y) + \frac{L_p^{e}}{{p+1}!}\norm{y \!-\! x_k}^{p+1}\right) \!+\! h(y) \!+\! \delta\norm{x_{k+\!1} \!-\! x_k}^{p+1}\\
           &\stackrel{\eqref{Lemma:1}}{\leq} \min_{\red{y:\;\|y - x_k\|\leq D_k}} f(y) + \frac{g(L^{e}_p)}{(p+1)!}\norm{y - x_k}^{p+1} + \delta\norm{x_{k+1} - x_k}^{p+1}\\ 
           &\stackrel{\eqref{eq:bdy}}{\leq} f(y_{k+1}) + \dfrac{g(L^{e}_{p})}{(p+1)!}\norm{y_{k+1}-x_{k}}^{p+1} + \delta\norm{x_{k+1} - x_k}^{p+1}.         
\end{align*}
Then, combining the last inequality with \eqref{eq:011}, we get:
\begin{align*}
\norm{y_{k+1}-x_{k}}^{p+1} \leq \dfrac{\mu_{p} + \red{\delta(p+1)!}}{\mu_{p} - g(L^{e}_{p})} \norm{x_{k+1}-x_{k}}^{p+1},
\end{align*}
which is the first statement of Assumption \ref{cond:2}. Further, using \eqref{eq:ch_rl} and optimality conditions for $y_{k+1}$, we obtain:
\begin{align*}
0 \in \partial f(y_{k+1}) + \frac{\mu_{p}}{p!}\norm{y_{k+1}-x_{k}}^{p-1} B(y_{k+1}-x_{k}).
\end{align*}

\vspace{-0.3cm}

\noindent It follows that:

\vspace{-0.5cm}
 
\begin{align*}
S_{f}(y_{k+1})\leq \frac{\mu_{p}}{p!}\norm{y_{k+1}-x_{k}}^{p}.
\end{align*}

\vspace{-0.3cm}

\noindent Hence, \red{Assumption \ref{cond:2} holds with $y_{k+1}$ given in \eqref{stp:prx}, $L^{1}_{p}=\left(C_{L^{e}_{p}}^{\mu_{p}} + \frac{\delta(p+1)!}{\mu_p - g(L^{e}_p)}\right)^{1/(p+1)}$ and $L^{2}_{p}=\frac{\mu_{p}}{p!}$.}
\end{proof}

 \noindent Finally, we provide a third (practical) example satisfying Assumption \ref{cond:2} when $p=2$, \blue{$h(\cdot) = 0$} and $g(\cdot) = \max(\cdot)$ function.
 \begin{lemma}\label{Lem:3}
Let the assumptions of Theorem \ref{Th:ncx} hold and additionally assume that $p=2$, $g(\cdot) = \max(\cdot)$ and the surrogate function $s(\cdot;\cdot)$ is given in Example \ref{exam:1} with $F_2 = 0$. Then, the global solution  of the subproblem \eqref{eq:sub_prbm} with $h=0$, denoted $x_{k+1}$, can be computed efficiently and consequently Assumption \ref{cond:2} holds with $y_{k+1}$ given in \eqref{stp:prx}, $L^{1}_{p}=\left(C_{L^{e}_{p}}^{\mu_{p}}\right)^{1/3}$ and $L^{2}_{p}=\frac{\mu_{p}}{2}$.
 \end{lemma}
 \vspace*{-0.5cm}
 \begin{proof}
     See appendix.
 \end{proof}
 \noindent 
\blue{From} the proof of Lemma \ref{Lem:3} one can see that  \textit{the global minimum} of  subproblem \eqref{eq:sub_prbm} can be computed as:
\begin{align*}
x_{k+1} =x_k - H_k(u,w)^{-1}g_k(u),
\end{align*}
where $H_k(u,w) = \sum_{i=1}^{m} u_i \nabla^2 F_i(x_k) + \frac{w}{2} I$, $g_k(u) = \sum_{i=1}^{m}u_i \nabla F_i(x_k)$ and $l_k(u) = \sum_{i=1}^{m} u_i F_i(x_k)$, with $(u,w)$ the solution of the following dual problem:
\begin{align}\label{eq:d-sbp}
&\max_{(u,w)\in D}\; l_k(u) -\frac{1}{2} \left\langle H_k(u,w)^{-1} g_k(u) , g_k(u)\right\rangle - \frac{1}{12(\sum_{i=1}^{m}u_i M_i)^2} w^3,
\end{align}
with $D = \left\{ (u,w)\in\Delta_m\times\mathbb{R}_+:\; \; \text{s.t.}\; H_k(u,w) \succ 0  \right\}$, 
i.e., a maximization of a concave function over a convex set $D$. Hence, if $m$ is not too large, this convex dual problem can be solved efficiently by interior point methods \cite{NesNem:94}. In conclusion, GCHO algorithm can be implementable for $p=2$ even for nonconvex problems, since we can effectively compute the global minimum $x_{k+1}$ of subproblem \eqref{eq:sub_prbm} using the powerful tools from convex optimization.


\noindent Define the following constant: $D_{R_{p}^{e},L^{1,2}_{p}}=\frac{\left(L_{p}^{1}\left(L_{p}^{2}\right)^{p}\right)^{\frac{p+1}{p}}(p+1)!}{-g(-R_{p}^{e})}$. Then, we  derive the following convergence result for GCHO algorithm in the nonconvex case.

\vspace{-0.5cm}

\begin{theorem}\label{th:MHO}
Let the assumptions of Theorem \ref{Th:ncx} hold. Additionally,  \blue{Assumption} \ref{cond:2} holds. Then, for the sequence $\left( x_{k}\right)_{k\geq 0}$ generated by Algorithm GCHO we have the following sublinear convergence rate:

\vspace{-0.5cm}

\begin{align*}
\min\limits_{j=0:k-1} S_{f}(y_{j})\leq  \frac{\left(D_{R_{p}^{e},L^{1,2}_{p}}(f(x_{0})-f^{*})\right)^{\frac{p}{p+1}}}{k^{\frac{p}{p+1}}}.
\end{align*}\end{theorem}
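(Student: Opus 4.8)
The plan is to chain the descent guarantee of Theorem \ref{Th:ncx} with the two estimates postulated in Assumption \ref{cond:2} and then telescope. First I would rewrite \eqref{eq:lem1}: since we are in the regime $g(-R_p)<0$, the descent relation is equivalent to
$$-\frac{g(-R_p)}{(p+1)!}\norm{x_{k+1}-x_k}^{p+1}\leq f(x_k)-f(x_{k+1}),$$
so the displacement $\norm{x_{k+1}-x_k}^{p+1}$ is bounded by the per-iteration decrease of $f$ times the positive factor $(p+1)!/(-g(-R_p))$.

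Next I would convert the stationarity measure of the auxiliary iterate into this displacement. The second relation in \eqref{stp:yk} gives $S_f(y_{k+1})\leq L^2_p\norm{y_{k+1}-x_k}^p$, and the first gives $\norm{y_{k+1}-x_k}\leq L^1_p\norm{x_{k+1}-x_k}$; substituting the latter into the former and raising the result to the power $\frac{p+1}{p}$ turns the right-hand side into a multiple of $\norm{x_{k+1}-x_k}^{p+1}$, which is exactly the quantity controlled in the first step. Inserting the descent bound then produces the single per-step inequality
$$S_f(y_{k+1})^{\frac{p+1}{p}}\leq D_{R_p,L^{1,2}_p}\big(f(x_k)-f(x_{k+1})\big),$$
with the constant $D_{R_p,L^{1,2}_p}$ as defined immediately before the theorem.

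Finally I would sum this inequality over $j=0,\dots,k-1$. The right-hand side telescopes to $D_{R_p,L^{1,2}_p}(f(x_0)-f(x_k))$, which Assumption \ref{ass:fun}.3 bounds by $D_{R_p,L^{1,2}_p}(f(x_0)-f^*)$. Lower-bounding the left-hand sum by $k$ times its minimal term yields $k\cdot\min_j S_f(y_j)^{\frac{p+1}{p}}\leq D_{R_p,L^{1,2}_p}(f(x_0)-f^*)$, and taking the $\frac{p}{p+1}$ power gives the announced $\mathcal{O}(k^{-p/(p+1)})$ rate. This parallels the averaging argument already carried out for the simple composite case around \eqref{eq:eq0}.

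I expect the only delicate point to be the exponent bookkeeping in the middle step: one must track how the powers of $L^1_p$ and $L^2_p$ combine once $S_f(y_{k+1})\leq L^2_p(L^1_p)^p\norm{x_{k+1}-x_k}^p$ is raised to $\frac{p+1}{p}$, and verify that the resulting homogeneity degree $p+1$ matches that of the displacement term supplied by the descent lemma. Everything genuinely substantive — the descent itself and the guarantee that such an auxiliary sequence $(y_k)$ exists, via the proximal point \eqref{stp:prx} in Lemma \ref{res:lem} — has already been established, so the remaining work is essentially this recombination together with a standard telescoping and min-over-average step.
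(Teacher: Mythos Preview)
Your proposal is correct and follows essentially the same route as the paper: chain the two inequalities from Assumption~\ref{cond:2} to get $S_f(y_{k+1})\leq L_p^2(L_p^1)^p\norm{x_{k+1}-x_k}^p$, raise to the $\frac{p+1}{p}$ power, insert the descent \eqref{eq:lem1}, telescope, and bound the sum from below by $k$ times its minimum. The paper's proof is exactly this three-line computation; note that the paper writes $L_p^2 L_p^1$ rather than $L_p^2(L_p^1)^p$ in the chained bound, which makes its displayed constant $D_{R_p,L_p^{1,2}}=(L_p^1 L_p^2)^{\frac{p+1}{p}}(p+1)!/(-g(-R_p))$ come out consistently, whereas your (more careful) tracking would produce $(L_p^1)^{p+1}(L_p^2)^{\frac{p+1}{p}}$ in place of $(L_p^1 L_p^2)^{\frac{p+1}{p}}$---a harmless constant discrepancy that does not affect the argument.
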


\vspace{-0.6cm}

\begin{proof}
From \blue{Assumption} \ref{cond:2}, we have:
\begin{align*}
S_{f}(y_{k+1})&\leq L_{p}^{2}\norm{y_{k+1}-x_{k}}^{p}\leq L_{p}^{2}\left(L_{p}^{1}\right)^{p}\norm{x_{k+1}-x_{k}}^{p}.
\end{align*}
Using the descent \eqref{eq:lem1}, we get:
\begin{align*}
S_{f}(y_{k+1})^{\frac{p+1}{p}}\leq \frac{\left(L_{p}^{2}\left(L_{p}^{1}\right)^{p}\right)^{\frac{p+1}{p}}(p+1)!}{-g(-R_{p}^{e})}\left( f(x_{k})-f(x_{k+1})\right).
\end{align*}    
Summing the last inequality from $j=0:k-1$ and taking the minimum, we get:
\begin{align*}
\min\limits_{j=0:k-1} S_{f}(y_{j})\leq  \frac{\left(D_{R_{p}^{e},L^{1,2}_{p}}(f(x_{0})-f^{*})\right)^{\frac{p}{p+1}}}{k^{\frac{p}{p+1}}},
\end{align*} 
 which proves the statement of the theorem.
\end{proof}
\noindent Theorem \ref{th:MHO} \blue{requires} that $x_{k+1}$ satisfies the descent \eqref{eq:loc} and Assumption \ref{cond:2}. However Assumption \ref{cond:2}, according to Lemmas \ref{lem:1} and  \ref{res:lem}, holds if $x_{k+1}$ is an (inexact) stationary point or an  inexact solution of the subproblem \eqref{eq:sub_prbm},  respectively.

\vspace{-0.6cm}

\begin{remark}
To this end, Assumption \ref{cond:2} requires an auxiliary  sequence $y_{k+1}$ satisfying:
\begin{equation}\label{eq:cve}
 \left\{
    \def\arraystretch{1.8}
     \begin{array}{ll}
        \norm{y_{k+1}-x_{k}}\leq L_{p}^{1}\norm{x_{k+1}-x_{k}} \\
         S_{f}(y_{k+1})\leq L_{p}^{2} \norm{x_{k+1}-x_{k}}^{p}.   
     \end{array}
     \right. 
\end{equation} 
If $\norm{x_{k+1}-x_{k}}$ is small, the point $x_{k}$ is near $y_{k+1}$, which is nearly stationary for $f$ (recall that $\norm{x_{k+1}-x_{k}}$ converges to $0$). Hence, we do not have approximate stationarity for the original sequence $x_k$ but for the auxiliary sequence $y_k$, which is close to the original sequence.   Note  that in practice, $y_{k+1} $ does not need to be computed. The purpose of $y_{k+1}$ is to certify that $x_{k}$ is approximately stationary in the sense of \red{\eqref{eq:cve}}.  For $p=1$ a similar conclusion was derived in \cite{DruPaq:19}.  For a better understanding of the behavior of the sequence $y_{k+1}$, let us come back to our example  $f(x)=\max\big(x^{2}-1,1-x^{2}\big)$  and  $p=1$. Recall that we have proved  $x_{k}>1$ and choosing $\mu_{p}=4$,  then $y_{k+1}$ is the solution of the following subproblem:

\vspace{-0.3cm}

\begin{align*}
y_{k+1} &= \argmin\limits_{y} \max\big(y^{2}-1,1-y^{2}\big) + 2(y-x_{k})^2.
\end{align*}
\vspace{-0.3cm}

\noindent Then, it follows immediately that:
\begin{equation*}
 y_{k+1} = \left\{
    \def\arraystretch{1.8}
     \begin{array}{ll}
        \frac{2}{3}x_{k}\quad & \text{if} \;\;x_k > \frac{3}{2} \\
         1\quad & \text{if} \;\; 1\leq x_{k} \leq\frac{3}{2}.   
     \end{array}
     \right. 
\end{equation*}
Since we have already proved that $x_k \to 1$, we conclude that $\vert y_{k+1} - x_{k} \vert \to 0$ and consequently  $\text{dist}(0,\red{\partial f(y_{k+1})})\to 0$ for $k \to \infty$, as predicted by our theory. 
\end{remark}


\subsection{Better rates for GCHO under KL}
In this section, we show that improved rates can be derived for GCHO algorithm if the objective function satisfies the KL property.  This is the first time when such convergence  analysis is derived for the GCHO algorithm on the composite problem \eqref{eq:optpb}. We believe that this lack of analysis comes from the fact that, \red{when $g$ is nonsmooth and different from the identity function}, one can't bound directly the distance $S_{f}(x_{k+1})$ by $\norm{x_{k+1} - x_{k}}$. However, using the newly introduced   (artificial) point $y_{k+1}$, we can now overcome this difficulty. Let us recall the definition of a function satisfying the \textit{Kurdyka-Lojasiewicz (KL)} property (see \cite{BolDan:07} for more details).

\vspace{-0.3cm}

\begin{definition}
\label{def:kl}
\noindent A proper lower semicontinuous  function $f: \mathbb{E}\rightarrow \bar{\mathbb{R}}$ satisfies  \textit{Kurdyka-Lojasiewicz (KL)} property on the compact set $\Omega \subseteq \text{dom} \; f$ on which $f$ takes a constant value $\red{f_*}$ if there exist $\delta, \epsilon >0$ such that   one has:
\begin{equation*}
\kappa' (f(x) - \red{f_*}) \cdot  S_{f}(x)  \geq 1  \quad   \forall x\!:  \text{dist}(x, \Omega) \leq \delta, \;  \red{f_*} < f(x) < \red{f_*} + \epsilon,  
\end{equation*}
where $\kappa: [0,\epsilon] \to \mathbb{R}$ is  concave differentiable function satisfying $\kappa(0) = 0$ and $\kappa'>0$.
\end{definition}    

\vspace{-0.5cm}

\noindent When  $ \kappa$ takes the form  $\kappa (t) = \sigma_q^{\frac{1}{q}} \frac{q}{q-1} t^{\frac{q-1}{q}}$, with $q >1$ and $\sigma_q>0$ (which is our interest here), the KL property establishes the following local geometry of the nonconvex function $f$ around a compact set~$\Omega$:

\vspace{-0.6cm}

\begin{equation}\label{eq:kl}
f(x) - \red{f_*}  \leq \sigma_q  S_{f}(x)^q \quad   \forall x\!: \;  \text{dist}(x, \Omega) \leq \delta, \; \red{f_*} < f(x) < \red{f_*} + \epsilon.  
\end{equation}

\vspace{-0.3cm}

\noindent  Note that the relevant aspect of the KL property is when $\Omega$ is a subset of stationary points for $f$, i.e.  $\Omega \subseteq \{x: 0 \in \partial f(x) \}$, since it is easy to establish the KL property when $\Omega$ is not related to stationary points. The KL property holds for a large class of functions including semi-algebraic functions (e.g., real polynomial functions), vector or matrix (semi)norms (e.g., $\|\cdot\|_p$ with $p \geq 0$ rational number), logarithm functions,  exponential functions and  uniformly convex functions,  see \cite{BolDan:07} for a comprehensive list. \red{ In particular,  the max (sup) of semi-algebraic functions is a semi-algebraic function, see \cite{BolSab:14} (Example 2).} Let us show that if $(x_{k})_{k\geq 0}$ is bounded, then also $( y_{k})_{k\geq 0}$ is bounded and they have the same limit points. 


\vspace{-0.3cm}

\begin{lemma}\label{rmq1}
Let $\left(x_{k}\right)_{k\geq 0}$ generated by Algorithm GCHO be bounded and $(y_{k})_{k\geq 0}$ \blue{satisfies} Assumption \ref{cond:2}. Then,  $(y_{k})_{k\geq 0}$ is bounded and the set of limit points of the sequence $\left(y_{k}\right)_{k\geq 0}$ coincides with the set of limit points of $\left( x_{k}\right)_{k\geq 0}$.
\end{lemma}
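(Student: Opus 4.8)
The plan is to show that the two sequences $(x_k)$ and $(y_k)$ have identical sets of limit points by establishing that their difference vanishes, and then using a standard subsequence argument. The key ingredient is already available: from Theorem \ref{Th:ncx} and the summability it implies, we know that $\norm{x_{k+1}-x_k}\to 0$. Combining this with the first relation in Assumption \ref{cond:2}, namely $\norm{y_{k+1}-x_k}\le L_p^1\norm{x_{k+1}-x_k}$, immediately gives $\norm{y_{k+1}-x_k}\to 0$ as well. So first I would record these two limits as the foundation.

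Next I would argue the inclusion of limit point sets in both directions. Suppose $x^*$ is a limit point of $(x_k)$, so there is a subsequence $x_{k_j}\to x^*$. I want to produce a subsequence of $(y_k)$ converging to the same $x^*$. The natural candidate is $y_{k_j+1}$: by the triangle inequality,
\begin{align*}
\norm{y_{k_j+1}-x^*}\le \norm{y_{k_j+1}-x_{k_j}} + \norm{x_{k_j}-x^*},
\end{align*}
and both terms on the right tend to zero (the first by $\norm{y_{k+1}-x_k}\to 0$, the second by the choice of subsequence). Hence $y_{k_j+1}\to x^*$, so $x^*$ is a limit point of $(y_k)$. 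For the reverse inclusion, if $y^*$ is a limit point of $(y_k)$, take a subsequence $y_{k_j+1}\to y^*$ (reindexing so the indices are of the form $k_j+1$, which is harmless since shifting by one does not change limit points). Then
\begin{align*}
\norm{x_{k_j}-y^*}\le \norm{x_{k_j}-y_{k_j+1}} + \norm{y_{k_j+1}-y^*}\to 0,
\end{align*}
so $x_{k_j}\to y^*$ and $y^*$ is a limit point of $(x_k)$.

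The boundedness of $(y_k)$, asserted in the statement, follows along the way: since $(x_k)$ is bounded and $\norm{y_{k+1}-x_k}\to 0$, the sequence $(y_{k+1})$ stays within a bounded neighborhood of the bounded set $\{x_k\}$, so it too is bounded; this guarantees that limit points of $(y_k)$ actually exist and the set-equality is between nonempty sets. I would fold this remark in before or alongside the two inclusions.

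I do not anticipate a serious obstacle here, as the argument is essentially a clean two-sided subsequence chase resting entirely on the two facts $\norm{x_{k+1}-x_k}\to 0$ and $\norm{y_{k+1}-x_k}\to 0$. The one point requiring mild care is the index bookkeeping: the auxiliary sequence is naturally indexed as $y_{k+1}$ paired with $x_k$, so when passing between the two sequences one must consistently pair $y_{k_j+1}$ with $x_{k_j}$ rather than with $x_{k_j+1}$, and invoke the harmlessness of a one-step index shift for limit-point sets. Getting this pairing right is the only place where a careless argument could go wrong.
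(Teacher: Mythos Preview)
Your proposal is correct and follows essentially the same approach as the paper: both arguments rest on the fact that $\norm{x_{k+1}-x_k}\to 0$ together with the bound $\norm{y_{k+1}-x_k}\le L_p^1\norm{x_{k+1}-x_k}$, and then run a two-sided subsequence chase. The only cosmetic difference is that the paper pairs $y_{k_t}$ with $x_{k_t-1}$ and writes the combined estimate $\norm{y_{k_t}-x_{k_t}}\le (L_p^1+1)\norm{x_{k_t}-x_{k_t-1}}$, whereas you pair $y_{k_j+1}$ with $x_{k_j}$; your explicit remark on the boundedness of $(y_k)$ is a small addition not spelled out in the paper.
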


\vspace{-0.6cm}

\begin{proof}
Indeed, since $\left(x_{k}\right)_{k\geq 0}$ is bounded, then it has limit points.  Let $x_{*}$ be a limit point of the sequence $\left(x_{k}\right)_{k\geq0}$. Then, there exists a subsequence $( x_{k_{t}})_{t\geq0}$ such that $x_{k_{t}}\to x_{*}$ for $t\to\infty$. We have:
\begin{align}\label{eq:la}
\norm{y_{k_{t}}-x_{k_{t}}}&\leq\norm{y_{k_{t}}-x_{k_{t}-1}}+\norm{x_{k_{t}}-x_{k_{t}-1}}\\
     &\stackrel{\eqref{stp:yk}}{\leq} \left(L^{1}_{p} + 1\right)\norm{x_{k_{t}}-x_{k_{t}-1}}\qquad \forall k\geq 0,\nonumber
\end{align}   
Since $(x_k)_{k\geq 0}$ is bounded and $\|x_{k+1} - x_{k}\|\to 0$, then $(y_{k})_{k\geq 0}$ is also bounded. This implies that $y_{k_{t}}\to x_{*}$. Hence, $x_{*}$ is also a limit point of the sequence $\left(y_{k}\right)_{k\geq 0}$. Further, let $y_{*}$ be a limit point of the bounded sequence $\left( y_{k}\right)_{k\geq 0}$. Then, there \blue{exists} a subsequence $( y_{\bar{k}_{t}})_{t\geq 0}$ such that $y_{\bar{k}_{t}}\to y_{*}$ for $t\to\infty$. \red{Taking $t\to\infty$ in an inequality similar to \eqref{eq:la} and using $\lim_{t\to \infty} \|x_{\bar{k}_t} - x_{\bar{k}_t - 1}\| = 0 $ and boundedness of $(x_k)_{k\geq 0}$, we get that $x_{\bar{k}_{t}}\to y_{*}$}, i.e.,  $y_{*}$ is also a limit point of the sequence $\left(x_{k}\right)_{k\geq0}$.     
\end{proof}

\vspace{-0.1cm}

\noindent Note that usually for deriving convergence rates under KL condition, we need to assume that the sequence generated by the algorithm is bounded (see e.g.,  Theorem 1 in \cite{BolSab:14}). Let us denote the set of limit points of $(x_{k})_{k\geq 0}$ by:

\vspace{-0.6cm}

\begin{align*}
\Omega (x_{0})=&\lbrace \bar{x}\in \mathbb{E}: \exists \text{ an increasing sequence of integers } (k_{t})_{t\geq0},\\
& \text{ such that } x_{k_{t}}\to \bar{x} \text{ as } t\to \infty \rbrace,
\end{align*} 

\vspace{-0.4cm}

\noindent and  the set of stationary points of problem \eqref{eq:optpb} by $\text{crit}f$.

\vspace{-0.3cm}

\begin{lemma}\label{Lemma:crt_pint}
Let the assumptions of Theorem \ref{Th:ncx} hold. Additionally, assume that  $\left(x_{k}\right)_{k\geq 0}$ is bounded, $(y_{k})_{k\geq 0}$ \blue{satisfies} Assumption \ref{cond:2} and $f$ is continuous. Then, we have: $\emptyset \neq \Omega(x_{0}) \subseteq \text{crit} f $, $ \Omega(x_{0})$ is compact and connected set, and $f$ is constant on $ \Omega(x_{0})$, i.e., $f(\Omega(x_{0})) = f_*$.
\end{lemma}

\vspace{-0.6cm}

\begin{proof}
First let us show that $f(\Omega(x_{0}))$ is constant. From \eqref{eq:lem1} we have that $\left( f(x_{k})\right)_{k\geq 0}$ is monotonically decreasing and since $f$ is assumed bounded from below, it converges, let us say to $\red{f_*}>-\infty $, i.e. $f(x_{k})\to \red{f_*}$ as $k\to \infty$. On the other hand let $x_{*}$ be a limit point of the sequence $\left( x_{k}\right)_{k\geq0}$. This means that there exist a subsequence $\left( x_{k_{t}}\right)_{t\geq0}$ such that $x_{k_{t}}\to x_{*} $. Since $f$ is continuous, then  $f(x_{k_{t}})\to f(x_{*}) = \red{f_*}$.
 In conclusion, we have $f(\Omega(x_{0}))=\red{f_*}$. The closeness property of $\partial f$ implies that $S_{f}(x_{*})=0$, and thus $0\in \partial f(x_{*})$. This proves that $x_{*}$ is a stationary point of $f$ and thus $\Omega(x_{0})$ is nonempty. By observing that $\Omega(x_{0})$ can be viewed as an intersection of compact sets:
\begin{align*}
\Omega(x_{0})=\cap_{q\geq 0} \overline{\cup_{k\geq q}\lbrace x_{k}\rbrace}, 
\end{align*}
so it is also compact. This completes our proof.
\end{proof}

\noindent \red{Note that $f_*$ from Lemma \ref{Lemma:crt_pint} is usually  different from $f^* = \inf_{x\in\text{dom}f } f(x)$ defined in  Assumption \ref{ass:fun}.} In addition, let us consider the following assumption:

\vspace{-0.5cm}

\begin{assumption}\label{cond:3}
For the sequence $\big(x_{k}\big)_{k\geq 0}$ generated by GCHO algorithm, there exist positive constants $\theta_{1,p},\theta_{2,p}>0 $ such that:
\begin{align}\label{eq:ad:ass}
f(x_{k+1})\leq f(y_{k+1}) + \theta_{1,p}\norm{y_{k+1}-x_{k}}^{p+1} + \theta_{2,p}\norm{x_{k+1}-x_{k}}^{p+1}   \quad \forall k\geq 0.
\end{align}
\end{assumption}

\vspace{-1cm}

\begin{remark}\label{remark:a3}	
\noindent Note that Assumption \ref{cond:3} holds when e.g.,  $g$ is the identity function or when $(y_{k})_{k\geq 0}$ is given in \eqref{stp:prx} and $x_{k+1}$ satisfies \eqref{eq:inx} (see Lemmas  \ref{res:lem} and \ref{Lem:3}). For completeness, we provide a proof for this statement in Appendix.  
\end{remark}

\vspace{-0.3cm}

\noindent Let us also recall the following lemma, whose proof is similar to the one  in \cite{AttBol:09}(Theorem 2).  For completeness, we give the proof in Appendix.

\vspace{-0.3cm}

\begin{lemma}\label{lemcv}
Let $\theta>0$, $C_{1},C_{2}\geq 0$ and $(\lambda_{k})_{k\geq 0}$ be a nonnegative, nonincreasing sequence, satisfying the following recurrence:
\begin{align}\label{eq:recurence}
\lambda_{k+1}\leq C_{1}\left(\lambda_{k}-\lambda_{k+1}\right)^{\frac{1}{\theta}}+C_{2}\left(\lambda_{k}-\lambda_{k+1}\right).
\end{align}
If $\theta \leq 1$, then there exists an integer $k_{0}$ such that:
\begin{align*}
\lambda_{k}\leq \left(\frac{C_{1}+C_{2}}{1+C_{1}+C_{2}}\right)^{k-k_{0}}\lambda_{0} \qquad \forall k\geq k_{0}.
\end{align*}
If $\theta>1$, then there \blue{exists} $\alpha > 0$ and integer $k_{0}$ such that:
\begin{align*}
\lambda_{k}\leq \frac{\alpha}{(k-k_{0})^{\frac{1}{\theta-1}}}\qquad \forall k\geq k_{0}.
\end{align*}
\end{lemma}

\vspace{-0.5cm}

\noindent From previous lemmas, all the conditions of the KL property from Definition \ref{def:kl} are satisfied.  Then, we can derive the following convergence rates depending on the KL parameter. 

\vspace{-0.3cm}

\begin{theorem}\label{Thm:kl}
Let the assumptions of  Lemma \ref{Lemma:crt_pint} hold. Additionally, assume that $f$ \blue{satisfies} the KL property \eqref{eq:kl} on $\Omega(x_0)$ and  Assumption \ref{cond:3} is valid. Then, the following convergence rates hold for the sequence $(x_{k})_{k\geq 0}$ generated by GCHO algorithm:
\begin{enumerate}
\item[$\bullet$]If $q\geq\frac{p+1}{p}$, then $f(x_{k})$ \blue{converges} to ${f_*}$ linearly for $k$ sufficiently large.
\item[$\bullet$]If $q < \frac{p+1}{p} $, then $f(x_{k})$ \blue{converges} to ${f_*}$ at sublinear rate of order $\mathcal{O}\left(\frac{1}{k^{\frac{pq}{p+1-pq}}}\right)$ for $k$ sufficiently large.
\end{enumerate}
\end{theorem}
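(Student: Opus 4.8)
The idea is to reduce the whole argument to the scalar recurrence \eqref{eq:recurence} and then invoke Lemma \ref{lemcv}. Set $\lambda_k := f(x_k) - f_\infty$, where $f_\infty$ is the common limit of $(f(x_k))_{k\geq 0}$ guaranteed by Lemma \ref{Lemma:crt_pint}. By the descent relation \eqref{eq:lem1} of Theorem \ref{Th:ncx} the sequence $(\lambda_k)_{k\geq 0}$ is nonnegative and nonincreasing, which is precisely the standing hypothesis of Lemma \ref{lemcv}. If $\lambda_{k_1}=0$ for some $k_1$ the statement is trivial, so I assume $\lambda_k>0$ for all $k$. The plan is to establish that $(\lambda_k)$ obeys
\begin{align*}
\lambda_{k+1}\leq C_{1}(\lambda_k-\lambda_{k+1})^{\frac{pq}{p+1}}+C_{2}(\lambda_k-\lambda_{k+1}),
\end{align*}
for suitable constants $C_1,C_2>0$ and all large $k$, i.e. \eqref{eq:recurence} with $\tfrac1\theta=\tfrac{pq}{p+1}$, and then to read off the two regimes directly from Lemma \ref{lemcv}.

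First I would chain the three structural inequalities at the auxiliary point $y_{k+1}$. Assumption \ref{cond:3} gives $\lambda_{k+1}=f(x_{k+1})-f_\infty\leq \big(f(y_{k+1})-f_\infty\big)+\theta_p\norm{y_{k+1}-x_k}^{p+1}$. To the first summand I apply the KL inequality \eqref{eq:kl} at $y_{k+1}$ with $f_*=f_\infty$, obtaining $f(y_{k+1})-f_\infty\leq \sigma_q S_f(y_{k+1})^q$, and then both relations of Assumption \ref{cond:2} in \eqref{stp:yk} to bound $S_f(y_{k+1})^q\leq (L_p^2)^q(L_p^1)^{pq}\norm{x_{k+1}-x_k}^{pq}$; the remaining term is controlled by $\theta_p(L_p^1)^{p+1}\norm{x_{k+1}-x_k}^{p+1}$ using the first relation of \eqref{stp:yk}. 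This produces
\begin{align*}
\lambda_{k+1}\leq a\,\norm{x_{k+1}-x_k}^{pq}+b\,\norm{x_{k+1}-x_k}^{p+1},
\end{align*}
with explicit $a,b>0$.

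Next I would eliminate $\norm{x_{k+1}-x_k}$ using the descent \eqref{eq:lem1}, which (recalling $g(-R_p)<0$) gives $\norm{x_{k+1}-x_k}^{p+1}\leq \tfrac{(p+1)!}{-g(-R_p)}(\lambda_k-\lambda_{k+1})$. Raising this to the powers $\tfrac{pq}{p+1}$ and $1$ respectively turns the displayed bound into \eqref{eq:recurence} with $\tfrac1\theta=\tfrac{pq}{p+1}$, hence $\theta=\tfrac{p+1}{pq}$. Lemma \ref{lemcv} then closes the argument: the case $\theta\leq 1$, i.e. $q\geq\tfrac{p+1}{p}$, yields linear convergence, while $\theta>1$, i.e. $q<\tfrac{p+1}{p}$, yields the rate $\mathcal{O}\big(1/k^{1/(\theta-1)}\big)=\mathcal{O}\big(1/k^{pq/(p+1-pq)}\big)$, matching the two claimed regimes.

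I expect the main obstacle to be justifying that the KL inequality \eqref{eq:kl} is actually applicable at $y_{k+1}$, since \eqref{eq:kl} is only local. For this I would use Lemma \ref{Lemma:crt_pint} (so that $\Omega(x_0)\subseteq\text{crit}f$ is compact and $f\equiv f_\infty$ on it) together with Lemma \ref{rmq1} (so that $(y_k)$ and $(x_k)$ share the same, hence $\Omega(x_0)$-valued, limit points); combined with $\norm{x_{k+1}-x_k}\to 0$ and $\norm{y_{k+1}-x_k}\to 0$ this gives $\text{dist}(y_{k+1},\Omega(x_0))\leq\delta$ and $f(y_{k+1})\to f_\infty$, so beyond some index $k_0$ the point $y_{k+1}$ lies in the KL neighborhood. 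The only remaining delicacy is the strict inequality $f_*<f(y_{k+1})$ required by \eqref{eq:kl}: whenever $f(y_{k+1})\leq f_\infty$ the KL term is simply dropped and $\lambda_{k+1}\leq b\,\norm{x_{k+1}-x_k}^{p+1}$ already fits \eqref{eq:recurence}, so this case is harmless and the recurrence holds uniformly for $k\geq k_0$.
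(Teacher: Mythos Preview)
Your proposal is correct and follows essentially the same route as the paper's proof: chain Assumption~\ref{cond:3}, the KL inequality \eqref{eq:kl} at $y_{k+1}$, and Assumption~\ref{cond:2} to bound $\lambda_{k+1}$ by powers of $\norm{x_{k+1}-x_k}$, then convert via the descent \eqref{eq:lem1} into the recurrence \eqref{eq:recurence} with $\theta=\tfrac{p+1}{pq}$ and invoke Lemma~\ref{lemcv}. Your treatment is in fact more careful than the paper's, which applies \eqref{eq:kl} at $y_{k+1}$ without explicitly verifying the localization hypotheses and omits the factor $(L_p^1)^{pq}$ in the constant $C_1$.
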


\vspace{-0.6cm}

\begin{proof}
Since $(x_k)_{k\geq 0}$ and $(y_k)_{k \geq 0}$ \blue{have} the same limit points, we get:
\begin{align*}
&f(x_{k+1})-{f_*}\stackrel{\eqref{eq:ad:ass}}{\leq} f(y_{k+1})-{f_*} + \theta_{1,p}\norm{y_{k+1}-x_{k}}^{p+1}+ \theta_{2,p}\norm{x_{k+1} - x_k}^{p+1}\\
                &\stackrel{\eqref{eq:kl} + \eqref{stp:yk}}\leq \sigma_{q} S_f(y_{k+1})^{q} + \left(\theta_{1,p}(L_{p}^{1})^{p+1} + \theta_{2,p}\right) \norm{x_{k+1}-x_{k}}^{p+1}\\
                &\stackrel{\eqref{stp:yk}}{\leq} \sigma_{q}\left (L_{p}^{2} (L_p^1)^p \right)^{q}\norm{x_{k+1}-x_{k}}^{qp} + \left(\theta_{1,p}(L_{p}^{1})^{p+1} + \theta_{2,p}\right)\norm{x_{k+1}-x_{k}}^{p+1}.
\end{align*}
If we define $\Delta_{k}=f(x_{k})-{f_*}$, \red{then combining the last inequality with \eqref{eq:lem1},} we get the following recurrence:
\begin{align*}
\Delta_{k+1} \leq C_{1}\left( \Delta_{k}-\Delta_{k+1}\right)^{\frac{qp}{p+1}} + C_{2}\left( \Delta_{k}-\Delta_{k+1}\right),
\end{align*}
where $ C_{1}= \sigma_{q}(L_{p}^{2}(L_p^1)^p)^{q}\left(\frac{(p+1)!}{-g(-R_{p}^{e})}\right)^{\frac{pq}{p+1}}$
and $ C_{2}= \left(\theta_{1,p}(L_{p}^{1})^{p+1} + \theta_{2,p}\right) \frac{(p+1)!}{-g(-R_{p}^{e})}.$
\noindent Using Lemma \ref{lemcv}, with $\theta=\frac{p+1}{pq}$ we get our statements.
\end{proof}

\vspace{-0.6cm}

\begin{remark}
Contrary to Theorem \ref{th:MHO}, under KL we prove in Theorem \ref{Thm:kl} that the original sequence $(x_k)_{k\geq 0}$ converge in function values. 
When the objective function $f$ is uniformly convex of order $p+1$ and $g$ not necessarily with full domain, \cite{DoiNes:21} proves linear convergence for their algorithm in function values. Our results are different, i.e., we provide convergence rates for GCHO algorithm for possibly nonconvex objective~$f$.
\end{remark}


\subsection{Convex convergence analysis}\label{sec:conv}
In this section, we assume that the objective function $f$ in \eqref{eq:optpb}  is convex. Since the problem \eqref{eq:optpb} is convex, we assume that $x_{k+1}$ is a global minimum of the subproblem \eqref{eq:sub_prbm}, which is convex provided that $M_p$ is sufficiently large (see Theorem 1 in  \cite{Nes:20}). Below, we also assume that the level sets of $f$ are bounded. Since GCHO algorithm is a descent method, this implies that there exist a constant $R_0>0$ such that $\norm{x_{k}-x^{*}}\leq R_0$ for all $k\geq 0$, where $x^*$ is an optimal solution of  \eqref{eq:optpb}. Then, we get the following sublinear rate for GCHO algorithm.

\vspace{-0.3cm}

\begin{theorem}\label{Theorem:convex}
Let $F$, $g$ and $h$ satisfy Assumption \ref{ass:fun} and additionally each $F_{i}$ admits a $p$ higher-order surrogate $s_{i}$ as in Definition \ref{def:surg} with the constants $L^{e}_{p}(i)$ and  $R_{p}^{e}(i)$, for $i=1:m$. Additionally, $f$ is a convex function and has bounded level sets. Let $\left(x_{k}\right)_{k\geq 0}$ be the sequence generated by Algorithm GCHO, $R_{p}^{e}=\big(R_{p}^{e}(1),\cdots,R_{p}^{e}(m)\big)$ and $L^{e}_{p}=\big(L^{e}_{p}(1),\cdots,L^{e}_{p}(m)\big)$. Then, we have the following convergence rate:
\begin{align*}
f(x_{k})-f(x^{*})\leq \dfrac{g(L^{e}_{p})R_0^{p+1}(p+1)^{p}}{p!k^{p}}.
\end{align*}
\end{theorem}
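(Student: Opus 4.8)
The plan is to establish a single one-step inequality valid for every feasible comparison point $y$, and then solve the resulting recursion by specializing $y$ along the segment joining $x_k$ to a minimizer $x^*$. First I would use that $x_{k+1}$ is a global minimizer of the subproblem \eqref{alg:MHO}. Since the surrogate dominates $F$ componentwise (property \eqref{sur:p1}) and $g$ is nondecreasing, we get $f(x_{k+1}) = g\big(F(x_{k+1})\big) + h(x_{k+1}) \le g\big(s(x_{k+1};\!x_k)\big) + h(x_{k+1})$, and global optimality gives $g\big(s(x_{k+1};\!x_k)\big) + h(x_{k+1}) \le g\big(s(y;\!x_k)\big) + h(y)$ for all $y \in \text{dom}f$. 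Combining the surrogate error bound \eqref{eq:qq1}, monotonicity of $g$, and the sublinearity \eqref{Lemma:1} exactly as in the proof of Lemma \ref{res:lem}, I obtain the key estimate
\[
f(x_{k+1}) \le f(y) + \frac{g(L^{e}_{p})}{(p+1)!}\norm{y-x_k}^{p+1} \qquad \forall y \in \text{dom}f .
\]

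Next I would insert the convex combination $y = \alpha x^* + (1-\alpha)x_k$ with $\alpha \in [0,1]$. Convexity of $f$ yields $f(y) \le \alpha f(x^*) + (1-\alpha)f(x_k)$, while $\norm{y-x_k} = \alpha\norm{x^*-x_k} \le \alpha R_0$ by the bounded-level-set assumption. Writing $\delta_k := f(x_k) - f(x^*)$ and $C := \tfrac{g(L^{e}_{p})R_0^{p+1}}{(p+1)!}$, these give the recursion
\[
\delta_{k+1} \le (1-\alpha)\delta_k + C\alpha^{p+1} \qquad \forall \alpha \in [0,1].
\]
The choice $\alpha = 1$ (i.e.\ $y = x^*$) immediately produces the crude bound $\delta_k \le C$ for all $k \ge 1$, which I would use both to seed the induction and to cover the small indices at which the sharp schedule for $\alpha$ is infeasible.

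Finally, I would solve the recursion by induction with the ansatz $\delta_k \le A/k^p$, where $A = C(p+1)^{p+1}$; one checks that $A/k^p = \tfrac{g(L^{e}_{p})R_0^{p+1}(p+1)^p}{p!\,k^p}$, which is precisely the claimed bound. For all $k \le p+1$ (in particular the base case) one has $\delta_k \le C \le A/k^p$, since $A/k^p \ge A/(p+1)^p = C(p+1) \ge C$. For the inductive step, choose $\alpha = (p+1)/k \le 1$ (valid for $k \ge p+1$); substituting the hypothesis gives $\delta_{k+1} \le \tfrac{A}{k^p} - \tfrac{A(p+1) - C(p+1)^{p+1}}{k^{p+1}}$, and the elementary inequality $(1+1/k)^{-p} \ge 1 - p/k$ shows $\tfrac{A}{(k+1)^p} \ge \tfrac{A}{k^p} - \tfrac{Ap}{k^{p+1}}$, so it suffices that $A(p+1) - C(p+1)^{p+1} \ge Ap$, i.e.\ $A \ge C(p+1)^{p+1}$, which holds with equality for our $A$. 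The main obstacle is precisely this recursion-solving step: identifying the decaying schedule $\alpha_k = (p+1)/k$, verifying the Bernoulli-type estimate, and matching constants so the induction closes exactly at $A = C(p+1)^{p+1}$; by comparison, deriving the one-step inequality is routine given the surrogate machinery already developed.
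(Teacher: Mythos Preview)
Your argument is correct. The one-step inequality you derive,
\[
\delta_{k+1}\le (1-\alpha)\delta_k + C\alpha^{p+1},\qquad C=\tfrac{g(L^{e}_{p})R_0^{p+1}}{(p+1)!},
\]
coincides with the paper's (the paper reaches it by applying convexity of $F$, $g$, $h$ separately inside the composition, while you invoke convexity of $f$ directly, but the outcome is the same). The genuine difference is in how the recursion is resolved. The paper minimizes over $\alpha$ to obtain $\alpha^*=\big(\tfrac{p!\,\delta_k}{g(L^e_p)R_0^{p+1}}\big)^{1/p}$, checks $\alpha^*\in[0,1)$ via the crude bound $\delta_1\le C$, and thereby produces the nonlinear recursion $\delta_k-\delta_{k+1}\ge c\,\delta_k^{(p+1)/p}$, which it then solves by a change of variables and an appeal to a result of Nesterov. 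You instead pick the explicit schedule $\alpha=(p+1)/k$ and close an induction on the ansatz $\delta_k\le C(p+1)^{p+1}/k^p$ using the convexity inequality $(1+1/k)^{-p}\ge 1-p/k$. Your route is fully self-contained and avoids the external recursion lemma; the paper's route is the one standard in the tensor-methods literature and makes the dependence on $\delta_k$ (rather than on $k$) explicit, which can be convenient for local-rate arguments. Both land on exactly the same constant $g(L^{e}_{p})R_0^{p+1}(p+1)^{p}/(p!\,k^{p})$.
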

\begin{proof}

\vspace{-0.6cm}

\red{Since $F(x_{k+1})\leq S(x_{k+1};\!\!x_k)$ (see \eqref{sur:p4}) and $g$ is nondecreasing}, we have:
\begin{align*}
f(x_{k+1}) &\leq g\big(s(x_{k+1};\!x_{k})\big)+h(x_{k+1})\\
 &\stackrel{\eqref{eq:op_cond}}{=}\min\limits_{x} g\big(s(x;\!\!x_{k})\big)+h(x)\\
                   &\stackrel{\eqref{eq:qq1}}{\leq} \min\limits_{x} g\left( F(x)+\dfrac{L^{e}_{p}}{(p+1)!}\norm{x-x_{k}}^{p+1}\right)+h(x).
\end{align*}
Hence we get:
\begin{align*}                    
f(x_{k+1})  &\stackrel{\eqref{Lemma:1}}{\leq} \min\limits_{x} g\big(F(x)\big)+ \dfrac{g(L^{e}_p)}{(p+1)!}\norm{x-x_{k}}^{p+1} + h(x) \\  
                    &= \min\limits_{x} f(x) + \frac{g(L_p^{e})}{(p+1)!}\norm{x - x_k}^{p+1}\\                                       
                    &\leq \min\limits_{\alpha\in [0,1]}  f(x_{k})+\alpha\big[(f(x^{*})-f(x_{k})\big]+\alpha^{p+1}\dfrac{R_0^{p+1}}{(p+1)!}g\left(L^{e}_{p}\right),                     
\end{align*}
\red{where the last inequality follows from the convexity of $f$ and the boundness of the level sets of $f$.}
The minimum  in $\alpha \geq 0$ is achieved at:
\begin{align*}
\alpha^{*}=\left(\dfrac{f(x_{k})-f(x^{*})p!}{g(L^{e}_{p})R_0^{p+1}}\right)^{\frac{1}{p}}. 
\end{align*} 
We have $0\leq \alpha^{*}<1 $. Indeed, since $\big(f(x_{k})\big)_{k\geq 0}$ is decreasing, we have:
\begin{align*}
f(x_{k})\leq f(x_{1})&\leq g\big( s(x_{1};\!\!x_{0})\big)+h(x_{1})\\
                     &=\min\limits_{x} g\big(s(x;\!\!x_{0})\big)+h(x)\\
                     &\stackrel{\eqref{eq:qq1}}{\leq} \min\limits_{x} g\left( F(x)+\frac{L^{e}_{p}}{(p+1)!}\norm{x-x_{0}}^{p+1}\right)+h(x)\\ 
                     &\leq g\left( F(x^{*})+\frac{L^{e}_{p}}{(p+1)!}\norm{x^{*}-x_{0}}^{p+1}\right)+h(x^{*})\\
		             &\leq f(x^{*})+\dfrac{g(L^{e}_{p})R_0^{p+1}}{(p+1)!}.
\end{align*}

\vspace{-0.4cm}

\noindent Hence:

\vspace{-0.6cm}

\begin{align*}
0\leq\alpha^{*}&\leq \left(\dfrac{\big(f(x_{1})-f(x^{*})\big)p!}{g(L^{e}_{p})R_0^{p+1}}\right)^{\frac{1}{p}}\leq \left(\dfrac{g(L^{e}_{p})R_0^{p+1}p!}{g(L^{e}_{p})R_0^{p+1}(p+1)!}\right)^{\frac{1}{p}} \\
          &= \left(\dfrac{p!}{(p+1)!}\right)^{\frac{1}{p}}=\left(\dfrac{\red{1}}{p+1}\right)^{\frac{1}{p}}<1.
\end{align*}
Thus, we conclude:

\vspace{-0.6cm}

\begin{align*}
f(x_{k+1})&\leq f(x_{k})-\alpha^{*}\left(f(x_{k})-f(x^{*})-\frac{g(L^{e}_{p})R_0^{p+1}}{(p+1)!}(\alpha^{*})^{p}  \right) \\
             &=  f(x_{k})-\dfrac{p\alpha^*}{p+1} \big[f(x_{k})-f(x^{*})\big] .
\end{align*}
Denoting $\delta_{k}=f(x_{k})-f(x^{*})$, we get the following estimate:
\begin{align*}
\delta_{k}-\delta_{k+1}\geq C \delta_{k}^{\frac{p+1}{p}},
\end{align*}

\vspace{-0.2cm}

\noindent where $\red{C=\frac{p}{p+1}\left( \dfrac{p!}{g(L^{e}_{p})R_0^{p+1}} \right)^{\frac{1}{p}}}$\!\!. Thus, for $\mu_{k}\!=\!C^{p}\delta_{k}$ we get the following recurence:

\vspace{-0.5cm}

\begin{align*}
\mu_{k}-\mu_{k+1}\geq \mu_{k}^{\frac{p+1}{p}}.
\end{align*}
Following the same proof as in \cite{Nes:20}(Theorem 4), we get:
\begin{align*}
\dfrac{1}{\mu_{k}}\geq\left(\dfrac{1}{\mu_{1}^{\frac{1}{p}}}+\dfrac{k-1}{p}\right)^{p}.
\end{align*}
Since
$$\dfrac{1}{\mu_{1}^{\frac{1}{p}}}=\dfrac{1}{C\red{\delta_{1}}^{\frac{1}{p}}}=\frac{p+1}{p}\left(\dfrac{g(L^{e}_{p})R_0^{p+1}}{p!(f(x_{1})-f^{*})} \right) ^{\frac{1}{p}} \geq \frac{1}{p}(p+1)^{\dfrac{p+1}{p}}, $$ 
then
\begin{align*}
\delta_{k}=C^{-p}\mu_{k}&=\left(\dfrac{p+1}{p}\right)^{p}\dfrac{g(L^{e}_{p})R_0^{p+1}}{p!}\mu_{k}\\
					    &\leq \left(\dfrac{p+1}{p}\right)^{p}\dfrac{g(L^{e}_{p})R_0^{p+1}}{p!}\left(\frac{1}{p}(p+1)^{\frac{p+1}{p}}+\dfrac{k-1}{p}\right)^{-p} \\
					    &=\dfrac{g(L^{e}_{p})R_0^{p+1}}{p!}\left((p+1)^{\frac{1}{p}}+\dfrac{k-1}{p+1}\right) ^{-p}
					    \leq  \dfrac{(p+1)^{p}g(L^{e}_{p})R_0^{p+1}}{p!k^{p}}.
\end{align*}    
This proves the statement of the theorem.
\end{proof}

\noindent Note that in the convex case the convergence results from \cite{DruPaq:19,BolChe:20,DoiNes:21} assume Lipschitz continuity of the $p$ derivative of the object function $F$, which may be too restrictive. However, Theorem \ref{Theorem:convex} \blue{assumes} Lipschitz continuity of the $p$ derivative of the error function $e(\cdot)$ (note that we may have the error function $e(\cdot)$ $p$ times  differentiable and with the  $p$ derivative  Lipschitz,  while the objective function $F$ may not be even differentiable, see Examples \ref{exam:1} and \ref{exam:2}). Hence, our proof is different and more general than  \cite{DruPaq:19,BolChe:20,DoiNes:21}. Moreover, our convergence rate from the previous theorem covers   the usual convergence rates $\mathcal{O}(\frac{1}{k^{p}})$ of higher-order Taylor-based methods in the convex  unconstrained case \cite{Nes:20},  simple composite case \cite{Nes:20,Nes:20Inxt} and  composite case for  $p\geq 1$ \cite{DoiNes:21,BolChe:20}. Therefore, Theorem \ref{Theorem:convex} provides a unified convergence analysis for general composite higher-order algorithms, that covers in particular, minimax strategies for nonlinear games, min-max problems and simple composite problems, under possibly more general assumptions.


\subsection{Adaptive GCHO algorithm} 
\label{sec:ad}
In this section, we propose an adaptive variant of GCHO algorithm. Since the surrogate functions in all the examples given in this paper depend on a given constant $M$ (see Examples \ref{exam:1} and \ref{exam:2}, where $M = M_p$), below we consider the following notation $s(\cdot;\cdot): = s_{M}(\cdot;\cdot)$. Note that the convergence results from Theorems \ref{Th:ncx}, \ref{th:MHO} and \ref{Thm:kl} are derived provided that Assumption \ref{cond:2} and \ref{cond:3} and the following properties of the sequence $(x_k)_{k\geq 0}$ generated by GCHO hold:

\vspace{-0.6cm}

\begin{align}\label{eq:adp0}
 g\big(s_{M}(x_{k+1};\!x_k)\big) + h(x_{k+1}) \leq f(x_k),
\end{align}

\vspace{-1cm}

\begin{align}\label{eq:adp}
g(s_{M}(x_{k+1};\!x_k)) - g(F(x_{k+1}))\geq \frac{C_p^e}{(p+1)!}\|x_{k+1} - x_k\|^{p+1},
\end{align}

\vspace{-0.4cm}

\noindent where $C_p^e: = -g(-R_p^e)$ is a given constant depending on the choice of the surrogate $s_M(x_{k+1};x_k)$, which may be difficult to find in practice. Hence, in the following we propose an adaptive general composite higher-order algorithm, called (A-GCHO):   
\begin{center}
\noindent\fbox{%
\parbox{11.5cm}{%
\textbf{Algorithm A-GCHO}\\
Given $x_{0}\in\text{dom}\,f$, $i=0$ and $R_p,M_0 > 0$. For $k\geq 0$ \textbf{do}:\\\\
\textbf{While}\; some criterion is not satisfied
\begin{enumerate}
\item[1.]  Compute a $p$ higher-order surrogate $s_{2^{i}M_k}(\cdot;\!\!x_{k})$ of $F$ near $x_{k}$
\item[2.]Compute $x_{k+1}$  satisfying the descent \eqref{eq:adp0} with $M = 2^{i}M_k$.\\
\textbf{If} \eqref{eq:adp} \text{holds} with $C_p^e = -g(-R_p)$ and $M = 2^{i}M_k$, then go to step 3.\\
\textbf{Else} set $i=i+1$ and go to Step 1.\\
\textbf{End If}
\item[3.] set $k=k+1$, $M_{k+1} = 2^{i-1}M_k$ and $i=0$.\\
\end{enumerate}
\textbf{End While}
}%
}
\end{center}
\vspace{0.2cm}
\noindent For a better understanding of this process, let us consider Example \ref{exam:1}, where $F = F_1 + F_2$, having  the $p$ derivative of $F_1$ $L_p^{F_1}$-Lipschitz and  $F_2$  proper closed convex function. Then, in this case  the surrogate is $s_M(y;\!\!\!x) = T^{F^1}_p(y;\!\!x) + \frac{M}{(p+1)!}\|y - x\|^{p+1} + F_2(y)$. Let $R_p,M_0 > 0$ be fixed. Then, the step 1 in A-GCHO algorithm can be seen as a line search procedure (see for example \cite{GraNes:19}): that is at each step $k \geq 0$ we choose $M_k\geq M_0$, then build $s_{M_k}(y;\!\!x_k) = T^{F^1}_p(y;\!\!x_k) + \frac{M_k}{(p+1)!}\|y - x_k\|^{p+1} + F_2(y)$ and compute $x_{k+1}$ satisfying \eqref{eq:adp0}. If \eqref{eq:adp} doesn't hold, then we increase  $M_k \leftarrow  2\cdot M_k$, recompute $s_{M_k}(y;\!\!x_k)$ using the new $M_k$ and go to step 2. We repeat this process until  condition \eqref{eq:adp} is satisfied. Note that this line search procedure finishes in a finite number of steps. Indeed, if $M_k \geq R_p + L_p^{F^1}$, then from inequality \eqref{eq:cd_adp}, we get $s_{M_k}(y;\!\!x_k) - F(y) \geq \frac{R_p}{(p+1)!}\|y - x_k\|^{p+1}$ for all $y$ and thus for $y = x_{k+1}$ and $g$ increasing function \eqref{eq:adp} holds. Note also that in this case the error function  $e$ satisfies  Definition \ref{def:surg} (i) with $L_p^e = 2(R_p + L_p^{F^1})$. Hence, using the same convergence analysis as in the previous sections, we can derive similar convergence rates as in Theorems  \ref{Th:ncx}, \ref{th:MHO} and \ref{Thm:kl} for A-GCHO algorithm under Assumption \ref{cond:2} and \ref{cond:3}, since the sequence $(x_k)_{k\geq 0}$ generated by A-GCHO automatically satisfies \eqref{eq:adp0} and \eqref{eq:adp}. For the convex case, as in Section 3.4, in A-GCHO algorithm we require that $x_{k+1}$ is the global solution of the corresponding subproblem and consequently similar convergence results as in Theorem \ref{Theorem:convex} can be derived for this adaptive algorithm.

\section{Numerical simulations}\label{sec4}
\red{In this section we present some preliminary numerical results for GCHO algorithm.  For simulations,  we consider the tests set from  \cite{MoGaHi:81}. In \cite{MoGaHi:81}, one can find systems of nonlinear equations, where one searches for $x^{*}$ such that $F_{i}(x^{*}) = 0$ for all  $i = 1,\cdots,m$. For solving these problems, we implement our GCHO algorithm for $p=1,2$. We consider two formulations:  min-max and least-squares problems, respectively. The min-max formulation has the form:}

\vspace{-0.6cm}

\begin{align}\label{min:2}
\min\limits_{x \in \mathbb{R}^n} f(x)   := \max(F_{1}^{2}(x),\cdots,F_{m}^{2}(x)).
\end{align}

\vspace{-0.3cm}

\noindent Similarly,  the least-squares formulation can be written as a simple composite minimization problem:

\vspace{-1cm}

\begin{align}\label{min:1}
\min\limits_{x \in  \mathbb{R}^n}   f(x) := \sum^{m}_{i=0} F_{i}^{2}(x).
\end{align} 

\vspace{-0.4cm}

\noindent Note that both formulation {fit} into our general problem \eqref{eq:optpb}. We consider the following 2 implementations. First, for problem \eqref{min:2}  we compare GCHO algorithm for $p=1,2$ with IPOPT (the results are given in Table \ref{tab2}). Secondly, for problem \eqref{min:1}  we compare GCHO algorithm  for $p=1,2$ with IPOPT and the method proposed in \cite{GoReSc:19} (the results are given in Table \ref{tab3}).  At each iteration of GCHO algorithm  we replace each function $F_{i}$ by its Taylor approximation of order $p$, with $p=1,2$, and a quadratic/cubic  regularization and solve the corresponding subproblem \eqref{eq:sub_prbm} using IPOPT \cite{WacBie:06}.  In the numerical simulations we have noticed that for $p=2$ IPOPT was able to detect a global minimizer of the subproblem at each iteration, i.e., the solution of IPOPT coincided with the solution obtained by solving the dual problem as described in the proof of Lemma \ref{Lem:3} given in the appendix.  Since it is difficult to compute the corresponding Lipschitz constants for the gradient/hessian, we use the line search procedure described in Section \ref{sec:ad}. Note that since in practice it is difficult to compute the sequence $(y_k)_{k\geq 1}$, then we cannot consider $\text{dist}(0,\partial f(y_k))\leq\epsilon$ as a stooping criterion for the proposed algorithm. \blue{Thus the stopping criterion considered in this paper is the same as in  \cite{BriGar:20}:
\begin{align*}
   \frac{f(x_k) - f_{\text{best}}}{\max(1,f_\text{best})} \leq 10^{-4}, 
\end{align*}
where  $f_{\text{best}} = f^* \approx 0$, but positive, and the starting point $x_0$ are taken from \cite{MoGaHi:81}}.  In Tables \ref{tab2} and \ref{tab3}, we summarize our numerical results in terms of cpu time and number of iterations for GCHO algorithm $p = 1,2$, IPOPT and \cite{GoReSc:19}. Note that the test functions we consider in the two tables are nonconvex and most of them satisfy the KL condition (as semi-algrabraic functions). From the tables, \red{we observe that GCHO algorithm ($p=1$ or $p=2$) applied to the min-max formulation performs better than the GCHO algorithm ($p=1$ or $p=2$) applied to the the least-squares problem, both in cpu time and  number of iterations. This is due to the fact that the regularization constants  for the min-max problem  \eqref{min:2}, $M_{p}^{\text{max}} = \big(M_{p}^{\text{max}}(1),  \cdots, M_{p}^{\text{max}}(m)\big)$, are much smaller than the one for the least-squares formulation \eqref{min:1},   $M_{p}^\text{ls}$, i.e.,  $M_{p}^\text{ls} \approx \sum_{i=1}^m M_{p}^{\text{max}} (i)$}. Moreover, from the tables we observe that increasing the order of the Taylor approximation is beneficial for the GCHO algorithm: e.g., in the min-max formulation, GCHO with $p=2$ is at least twice faster than GCHO with $p=1$. \red{We also observe from Table \ref{tab3} that GCHO algorithm applied to min-max formulation for $p=2$ has a better behavior (in both cpu time and number of iterations) than the method proposed in \cite{GoReSc:19} for the least-squares formulation.  Finally, GCHO algorithm ($p=1, 2$)  for both formulations is able to identify the global optimal points/values given in \cite{MoGaHi:81}, while IPOPT directly applied to the formulations \eqref{min:1} or \eqref{min:2} may fail to identify the global optimal points/values (see Tables 2 and 3).}
\begin{table}[]
\begin{tabular}{|l|ll|ll|ll|}

\hline
min-max formulation          & \multicolumn{2}{l|}{GCHO(p=1)}        & \multicolumn{2}{l|}{GCHO(p=2)}        & \multicolumn{2}{l|}{IPOPT for \eqref{min:2}} \\ \hline
test functions  & \multicolumn{1}{l|}{iter} & cpu & \multicolumn{1}{l|}{iter} & cpu& \multicolumn{1}{l|}{iter}  & cpu \\ \hline


(2) Fre        & \multicolumn{1}{l|}{\blue{32}}     & \blue{0.78}    & \multicolumn{1}{l|}{\blue{5}}     & \blue{0.18}    & \multicolumn{1}{l|}{85}      & 0.01    \\ \hline





(7) Hel    & \multicolumn{1}{l|}{\blue{33}}     & \blue{0.74}     & \multicolumn{1}{l|}{\blue{11}}     & \blue{0.4}     & \multicolumn{1}{l|}{29}      & 0.02    \\ \hline

(8) Bar     & \multicolumn{1}{l|}{\blue{19}}     & \blue{0.69}     & \multicolumn{1}{l|}{\blue{8}}     & \blue{0.42}    & \multicolumn{1}{l|}{27}      & 0.06$^*$      \\ \hline

(9) Gau     & \multicolumn{1}{l|}{\blue{9}}     & \blue{0.33}     & \multicolumn{1}{l|}{\blue{2}}     & \blue{0.15}    & \multicolumn{1}{l|}{16}      & 0.04$^*$     \\ \hline

(12) Box     & \multicolumn{1}{l|}{\blue{23}}     & \blue{0.9}     & \multicolumn{1}{l|}{\blue{9}}     & \blue{0.5}     & \multicolumn{1}{l|}{36}      & 0.08$^{*}$     \\ \hline



(15) Kow $(m=11,n=4)$    & \multicolumn{1}{l|}{\blue{48}}     & \blue{0.7}     & \multicolumn{1}{l|}{\blue{7}}     &  \blue{0.35}   & \multicolumn{1}{l|}{5000}      & 7.8$^*$  \\ \hline

(17) Osb-1$(m=33,n=5)$         & \multicolumn{1}{l|}{\blue{57}}      & \blue{3.8}  &  \multicolumn{1}{l|}{\blue{9}}     &  \blue{1.7}    &  \multicolumn{1}{l|}{40}      & 0.9    \\ \hline

(18) Big $(m=13,n=6)$     & \multicolumn{1}{l|}{\blue{149}}     & \blue{7.73}     & \multicolumn{1}{l|}{\blue{14}}     & \blue{0.6}     & \multicolumn{1}{l|}{593}      & 1.5$^*$     \\ \hline

(19) Osb-2 ($m\!=\!65,n\!=\!11$)         & \multicolumn{1}{l|}{\blue{67}}     & \blue{18.75}     & \multicolumn{1}{l|}{\blue{20}}     & \blue{12.1}     & \multicolumn{1}{l|}{55}      &3.7$^*$     \\ \hline

(20) Wat (m=31,n=9)      & \multicolumn{1}{l|}{\blue{23}}     &\blue{2.56}     & \multicolumn{1}{l|}{\blue{7}}     & \blue{2.63}    & \multicolumn{1}{l|}{5000}      & 50.5$^*$    \\ \hline


(21) E-Ros ($n=m=6$)      & \multicolumn{1}{l|}{\blue{21}}     & \blue{0.63}     & \multicolumn{1}{l|}{\blue{3}}     & \blue{0.21}    & \multicolumn{1}{l|}{379}      & 0.72     \\ \hline

(21) E-Ros ($n=m=20$)      & \multicolumn{1}{l|}{\blue{26}}     & \blue{1.7}    & \multicolumn{1}{l|}{\blue{3}}     & \blue{0.53}    & \multicolumn{1}{l|}{124}      & 3.8    \\ \hline

(21) E-Ros ($n=m\!=\!100$)     & \multicolumn{1}{l|}{\blue{25} }     &  \blue{102.5}   & \multicolumn{1}{l|}{\blue{5}}     & \blue{40.1}     & \multicolumn{1}{l|}{119}      &  133.9   \\ \hline

(24) Pen II ($n=10$)   & \multicolumn{1}{l|}{\blue{61}}     & \blue{6.4}     & \multicolumn{1}{l|}{\blue{3}}     &  \blue{0.32}    & \multicolumn{1}{l|}{64}      &0.9$^*$    \\ \hline


(26) Tri ($n=10$)    & \multicolumn{1}{l|}{\blue{20}}     & \blue{0.53}    & \multicolumn{1}{l|}{\blue{3}}     & \blue{0.22}      & \multicolumn{1}{l|}{45}      &0.2$^*$    \\ \hline

(30) Bro ($n=10$)     & \multicolumn{1}{l|}{\blue{44}}     & \blue{0.88}     & \multicolumn{1}{l|}{\blue{3}}     & \blue{0.25}     & \multicolumn{1}{l|}{118}      &0.3$^*$    \\ \hline

\end{tabular}
\caption{Behaviour of GCHO  for $p=1,2$ and IPOPT for the min-max formulation \eqref{min:2}. Here "*" means that IPOPT didn't find $x^*$/$f^*$ reported in  \cite{MoGaHi:81}.}\label{tab2}

\vspace{0.2cm}
\begin{tabular}{|l|ll|ll|ll|ll|}
\hline

L.S formulation & \multicolumn{2}{l|}{GCHO(p=1)}        & \multicolumn{2}{l|}{GCHO(p=2)}        & \multicolumn{2}{l|}{\cite{GoReSc:19}} & \multicolumn{2}{l|}{IPOPT for \eqref{min:1}}       \\ \hline

test functions     & \multicolumn{1}{l|}{iter} & cpu & \multicolumn{1}{l|}{iter} & cpu & \multicolumn{1}{l|}{iter}  & cpu & \multicolumn{1}{l|}{iter} & cpu \\ \hline

(2) Fre      & \multicolumn{1}{l|}{\blue{562}}     & \blue{7.2}    & \multicolumn{1}{l|}{\blue{23}}     & \blue{0.48}    & \multicolumn{1}{l|}{\blue{7}}      & \blue{0.19}    & \multicolumn{1}{l|}{85}     &0.06     \\ \hline

(7) Hel      & \multicolumn{1}{l|}{\blue{59}}     &\blue{1.2}    & \multicolumn{1}{l|}{\blue{25}}     & \blue{0.95}    & \multicolumn{1}{l|}{\blue{15}}     & \blue{0.55}     & \multicolumn{1}{l|}{12}     &0.02     \\ \hline

(8) Bar       & \multicolumn{1}{l|}{\blue{88}}     & \blue{1.3}     & \multicolumn{1}{l|}{\blue{13}}     & \blue{0.5}    & \multicolumn{1}{l|}{\blue{12}}      & \blue{0.48}    & \multicolumn{1}{l|}{26}     & 0.04    \\ \hline

(9) Gau      & \multicolumn{1}{l|}{\blue{71}}     & \blue{1.25}    & \multicolumn{1}{l|}{\blue{13}}     & \blue{0.65}    & \multicolumn{1}{l|}{\blue{5}}     & \blue{0.17}     & \multicolumn{1}{l|}{8}     &0.03$^*$     \\ \hline

(12) Box       & \multicolumn{1}{l|}{\blue{719}}     & \blue{12.1}    & \multicolumn{1}{l|}{\blue{51}}     & \blue{2.05}     & \multicolumn{1}{l|}{\blue{13}}      & \blue{0.68}     & \multicolumn{1}{l|}{34}     & 0.05    \\ \hline



(15) Kow   & \multicolumn{1}{l|}{\blue{534}}     &  \blue{13.1}    & \multicolumn{1}{l|}{\blue{14}}     & \blue{0.67}     & \multicolumn{1}{l|}{\blue{10}}      & \blue{0.49}    & \multicolumn{1}{l|}{825}     & 1.98$^*$    \\ \hline

(17) Osb-1   & \multicolumn{1}{l|}{\blue{815}}     &  \blue{45.8}   & \multicolumn{1}{l|}{\blue{101}}     & \blue{9.6}     & \multicolumn{1}{l|}{\blue{18}} &  \blue{3.6}     & \multicolumn{1}{l|}{103}     & 1.9    \\ \hline

(18) Big      & \multicolumn{1}{l|}{\blue{968}}     & \blue{18.5}    & \multicolumn{1}{l|}{\blue{44}}     & \blue{2.19}    & \multicolumn{1}{l|}{\blue{17}}      & \blue{0.79}    & \multicolumn{1}{l|}{44}     & 0.15$^*$     \\ \hline

(19) Osb-2        & \multicolumn{1}{l|}{\blue{365}}     & \blue{45.9}    & \multicolumn{1}{l|}{\blue{82}}     & \blue{35.6}     & \multicolumn{1}{l|}{\blue{29}}      & \blue{15.3}    & \multicolumn{1}{l|}{329}     &  11.5$^*$    \\ \hline

(20) Wat       & \multicolumn{1}{l|}{\blue{161}}     &  \blue{50.6}    & \multicolumn{1}{l|}{\blue{21}}     & \blue{7.6}    & \multicolumn{1}{l|}{\blue{10}}      & \blue{3.66}   & \multicolumn{1}{l|}{794}     &  8.16$^*$    \\ \hline

(21) E-Ros       & \multicolumn{1}{l|}{\blue{2563}}     & \blue{38.7}     & \multicolumn{1}{l|}{\blue{12}}     & \blue{0.93}    & \multicolumn{1}{l|}{\blue{4}}      & \blue{0.28}    & \multicolumn{1}{l|}{83}     & 0.33     \\ \hline

(21) E-Ros       & \multicolumn{1}{l|}{ \blue{3040}}     & \blue{82.3}    & \multicolumn{1}{l|}{\blue{28}}     & \blue{9.4}     & \multicolumn{1}{l|}{\blue{5}}      & \blue{1.53}    & \multicolumn{1}{l|}{233}     &  1.8    \\ \hline

(21) E-Ros       & \multicolumn{1}{l|}{\blue{530}}     & \blue{253}    & \multicolumn{1}{l|}{\blue{33}}     &\blue{288.2}     & \multicolumn{1}{l|}{\blue{7}}      & \blue{71.5}    & \multicolumn{1}{l|}{223}     &  162.4 \\ \hline

(24) Pen II      & \multicolumn{1}{l|}{\blue{147}}     & \blue{10.2}     & \multicolumn{1}{l|}{\blue{7}}     & \blue{0.8}     & \multicolumn{1}{l|}{\blue{3}}      & \blue{0.35}     & \multicolumn{1}{l|}{22}     & 0.08$^*$  \\ \hline


(26) Tri       & \multicolumn{1}{l|}{\blue{28}}     & \blue{0.55}    & \multicolumn{1}{l|}{\blue{5}}     & \blue{0.3}     & \multicolumn{1}{l|}{\blue{3}}      & \blue{0.22}    & \multicolumn{1}{l|}{26}     &0.05$^*$   \\ \hline

(30) Bro        & \multicolumn{1}{l|}{\blue{56}}     & \blue{0.9}    & \multicolumn{1}{l|}{\blue{12}}     & \blue{0.59}     & \multicolumn{1}{l|}{\blue{4}}      & \blue{0.35}    & \multicolumn{1}{l|}{36}     &0.07$^*$   \\ \hline

\end{tabular}
\caption{Behaviour of GCHO algorithm for $p=1,2$, algorithm \cite{GoReSc:19} and IPOPT for the least-squares problem \eqref{min:1}. Here "*" means that IPOPT didn't find $x^*$/$f^*$ reported in  \cite{MoGaHi:81}. }\label{tab3}
\end{table}

\vspace{-0.2cm}


\section{Appendix}
\begin{appendices}

\begin{proof}\textit{of Lemma \ref{Lem:3}}
Let us first prove that for $p=2$,  $g(\cdot)=\max(\cdot)$ \blue{and $h(\cdot) = 0$}, one can compute efficiently the global solution  $x_{k+1}$ of the subproblem \eqref{eq:sub_prbm}. Indeed,  in this particular case \eqref{eq:sub_prbm} is equivalent to the following subproblem:
\begin{align}\tag{\blue{39}}
\min_{x \in \mathbb{R}^n} \max\limits_{i=1:m}&\left\{ F_i(x_k) + \langle \nabla F_i(x_k), x - x_k \rangle + \frac{1}{2} \left\langle \nabla^2 F_i(x_k)(x - x_k), x - x_k\right\rangle\right.\\ \nonumber 
& +\left. \frac{M_i}{6} \|x - x_k \|^3\right\}.
\end{align}
Further, this is equivalent to:
\begin{align*}
    \min_{x \in \mathbb{R}^n} \max_{\substack{u \in\Delta_m}}\; &\sum_{i=1}^{m} u_i F_i(x_k) + \left\langle \sum_{i=1}^{m}u_i \nabla F_i(x_k), x - x_k \right\rangle \\
    & + \frac{1}{2} \left\langle \sum_{i=1}^{m} u_i \nabla^2 F_i(x_k)(x - x_k), x - x_k \right\rangle+ \frac{\sum_{i=1}^{m}u_i M_i}{6} \|x - x_k\|^3,
\end{align*}
where $u=(u_1,\cdots,u_m)$ and $\Delta_m := \left\{u\geq 0 : \sum_{i=1}^{m} u_i = 1 \right\} $ is the standard simplex in $\mathbb{R}^m$. Further, this $\min-\max$ problem can be written as follows:
\begin{align*}
&\min_{x \in \mathbb{R}^n} \max_{ \substack{u\in\Delta_M}}\; \sum_{i=1}^{m} u_i F_i(x_k) + \left\langle \sum_{i=1}^{m}u_i \nabla F_i(x_k), x \!-\! x_k \right\rangle\\
& + \frac{1}{2} \left\langle \sum_{i=1}^{m} u_i \nabla^2 F_i(x_k) (x \!-\! x_k), (x \!-\! x_k) \right\rangle
     \!+\! \max_{w\geq 0} \left( \frac{w}{4}\|x \!-\! x_k\|^2 \!-\! \frac{1}{12(\sum_{i=1}^{m} u_i M_i)^2} w^3 \!\!\right).
\end{align*}
Denote for simplicity $H_k(u,w) = \sum_{i=1}^{m} u_i \nabla^2 F_i(x_k) + \frac{w}{2} I$, $g_k(u) = \sum_{i=1}^{m}u_i \nabla F_i(x_k)$, $l_k(u) = \sum_{i=1}^{m} u_i F_i(x_k)$ and $\Tilde{M}(u) = \sum_{i=1}^{m}u_i M_i$. 
Then, the dual formulation of this problem takes the form:
\begin{align*}
 \min_{x \in \mathbb{R}^n} \max_{ \substack{u\in\Delta_m \\ w\in\mathbb{R}_+}}\; l_k(u) + \left\langle g_k(u), x \!-\! x_k \right\rangle \!+\! \frac{1}{2} \left\langle H_k(u,w) (x \!-\! x_k), (x \!-\! x_k) \right\rangle
  - \frac{w^3}{12\Tilde{M}(u)^2}.    
\end{align*}
Consider the following notations:
\begin{align*}
&  \theta(x,u) = l_k(u) + \langle  g_k(u), x - x_k \rangle \!+\! \frac{1}{2}\! \left\langle\!\! \left(\sum_{i=1}^{m} u_i \nabla^2 F_i(x_k)\right)\! (x \!-\! x_k), x \!-\! x_k \!\!\right\rangle\\
&\qquad\qquad + \frac{\Tilde{M}(u)}{6}\|x \!-\! x_k\|^3, \\
    & \beta(u,w) = l_k(u) -\frac{1}{2} \left\langle H_k(u,w)^{-1} g(u) , g(u)\right\rangle - \frac{1}{12\Tilde{M}(u)^2} w^3, \\
    & D = \left\{ (u,w)\in\Delta_m\times\mathbb{R}_+:\; \; \text{s.t.}\; \sum_{i=1}^{m} u_i \nabla^2 F_i(x_k) + \frac{w}{2} I\succ 0  \right\}.
\end{align*}
Below, we prove that if there exists an $M_i >0$, for some $i = 1:m$, then we have the following relation:
    \begin{align*}
        \theta^*:=\min_{x\in\mathbb{R}^n}\max_{u\in \Delta_m} \theta(x,u)  =  \max_{(u,w)\in D}\beta(u,w) = \beta^*. 
    \end{align*}
\noindent Additionally, for any $(u,w)\in D$  the  direction $x_{k+1} = x_k -H_k(u,w)^{-1}g_k(u)$ satisfies:
 \begin{align}\tag{\blue{40}}
     0\leq \theta(x_{k+1},u) - \beta(u,w)  = \frac{\Tilde{M}(u)}{12} \left(\frac{w}{\Tilde{M}(u)} + 2r_k\right)\left(r_k - \frac{w}{\Tilde{M}(u)}\right)^2,
 \end{align}
where $r_k := \|x_{k+1} - x_k\|$.
Indeed, let us first show that $\theta^*\geq \beta^*$. Using a similar reasoning as in \cite{NesPol:06}, we have:
\begin{align*}
\theta^* & =  \min_{x \in \mathbb{R}^n} \max_{ \substack{u\in\Delta_m\\ w  \in\mathbb{R}_+}}\; l_k(u) + \left\langle g_k(u), x - x_k \right\rangle + \frac{1}{2} \left\langle H_k(u,w) (x - x_k), x - x_k\right\rangle \!-\! \frac{ w^3}{12\Tilde{M}(u)^2} \\
     &\geq \max_{ \substack{u\in\Delta_m\\ w  \in\mathbb{R}_+}} \min_{x \in \mathbb{R}^n}\;l_k(u) + \left\langle g_k(u), x - x_k \right\rangle + \frac{1}{2} \left\langle H_k(u,w) (x - x_k), x - x_k \right\rangle\!-\! \frac{ w^3}{12\Tilde{M}(u)^2} \\
     &\geq \max_{(u,w)\in D} \min_{x \in \mathbb{R}^n}\;l_k(u) \!+\! \left\langle g_k(u), x - x_k \right\rangle \!+\! \frac{1}{2} \left\langle H_k(u,w) (x - x_k), x - x_k \right\rangle \!-\! \frac{ w^3}{12\Tilde{M}(u)^2}\\
     & = \max_{(u,w)\in D} \blue{l_k(u)} -\frac{1}{2} \left\langle H_k(u,w)^{-1} g_k(u) , g_k(u)\right\rangle - \frac{1}{12\Tilde{M}(u)^2} w^3
      = \beta^*.
\end{align*}
Let $(u,w) \in D$. Then, we have $g_k(u)= - H_k(u,w)(x_{k+1} - x_k)$ and thus: 
\begin{align*}
&\theta(x_{k+1},u) = l_k(u) +   \langle  g_k(u), x_{k+1} - x_k \rangle  \\
&\qquad + \frac{1}{2} \left\langle \left(\sum_{i=1}^{m} u_i \nabla^2 F_i(x_k)\right) (x_{k+1}- x_k), x_{k+1} - x_k \right\rangle +  \frac{\Tilde{M}(u)}{6}r_k^3 \\
& = l_k(u)  - \left\langle H_k(u,w)(x_{k+1} - x_k), x_{k+1} - x_k \right\rangle \\
&\qquad + \frac{1}{2} \left\langle \left(\sum_{i=1}^{m} u_i \nabla^2 F_i(x_k)\right) (x_{k+1}- x_k), x_{k+1} - x_k \right\rangle + \frac{\Tilde{M}(u)}{6}r_k^3  \\  
& = l_k(u) \!-\!\frac{1}{2}\!\! \left\langle \!\! \left(\sum_{i=1}^{m} u_i \nabla^2 F_i(x_k) \!+\! \frac{w}{2}I\right)(x_{k+1} \!-\! x_k), x_{k+1} \!-\! x_k \!\!\right\rangle \!-\! \frac{w}{4}r_k^2 \!+\!  \frac{\Tilde{M}(u)}{6}r_k^3 \\
& = \beta(u,w) + \frac{1}{12\Tilde{M}(u)^2}w^3 - \frac{w}{4}r_k^2 +  \frac{\Tilde{M}(u)}{6}r_k^3 \\
& = \beta(u,w) + \frac{\Tilde{M}(u)}{12}\!\left(\frac{w}{\Tilde{M}(u)}\right)^3 \!\!\!- \!\frac{\Tilde{M}(u)}{4}\!\! \left(\frac{w}{\Tilde{M}(u)}\right)\!r_k^2\!+\! \frac{\Tilde{M}(u)}{6}r_k^3 \\
& =  \beta(u,w) + \frac{\Tilde{M}(u)}{12} \left(\frac{w}{\Tilde{M}(u)} + 2r_k\right)\left(r_k - \frac{w}{\Tilde{M}(u)}\right)^2,
\end{align*}
which proves (\blue{40}). Note that we have \cite{NesPol:06}:
\begin{align*}
    \nabla_w \beta(u,w) & = \frac{1}{4} \|x_{k+1} - x_k\|^2 - \frac{1}{4\Tilde{M}(u)^2} w^2  = \frac{1}{4} \left(r_k + \frac{w}{\Tilde{M}(u)}\right)\left(r_k - \frac{w}{\Tilde{M}(u)}\right).
\end{align*}
Therefore, if $\beta^*$ is attained at some $(u^*,w^*) \in D$, then we have $\nabla \beta(u^*,w^*) = 0$. This implies $\frac{w^*}{\Tilde{M}(u^*)} = r_k$ and by (\blue{40}) we conclude that $\theta^* = \beta^*$.\\
Finally, if $x_{k+1}$ is a global solution of the subproblem \eqref{eq:sub_prbm} (or equivalently (\blue{39})), then it satisfies the inexact condition \eqref{eq:inx} with $\delta = 0$. Hence, using the proof of Lemma \ref{res:lem} with $\delta = 0$ we can conclude that Assumption \ref{cond:2} holds with $y_{k+1}$ given in \eqref{stp:prx}, $L^{1}_{p}=\left(C_{L^{e}_{p}}^{\mu_{p}}\right)^{1/3}$ and $L^{2}_{p}=\frac{\mu_{p}}{2}$. 
\end{proof}


\begin{proof}\!\textit{of Remark}  \ref{remark:a3}. 
If $g$ is the identity function, then taking $y_{k+1}=x_{k+1}$ one can see that  Assumption \ref{cond:3} holds for any $\theta_{1,p}$ and $\theta_{2,p}$  nonnegative constants.  If $g$ is a general function, then Assumption \ref{cond:3} holds,  provided that $x_{k+1}$ satisfies the inexact optimality condition \eqref{eq:inx}. Indeed, in this case, we have:
\begin{align*}
f(x_{k+1}) &\leq g \Big(s(x_{k+1};\!\!x_{k})\Big) + h(x_{k+1})\\ 
           &\stackrel{\eqref{eq:inx}}{\leq} \min_{\red{y:\; \|y - x_k\|\leq D_k}} g \Big(s(y;\!\!x_{k})\Big) + h(y) + \delta\norm{x_{k+1} - x_k}^{p+1}\\
           &\stackrel{\eqref{eq:qq1},\eqref{Lemma:1}}{\leq} \min_{\red{y:\;\|y - x_k\|\leq D_k}} g\big(F(y)\big)+ h(y) + \dfrac{g(L^{e}_{p})}{(p+1)!}\norm{y-x_{k}}^{p+1}\\ &\qquad + \delta\norm{x_{k+1} - x_k}^{p+1}\\
           &\leq f(y_{k+1}) + \dfrac{g(L^{e}_{p})}{(p+1)!}\norm{y_{k+1}-x_{k}}^{p+1} + \delta\norm{x_{k+1} - x_k}^{p+1},      
\end{align*}
where the last inequality follows  taking $y = y_{k+1}$. Hence, Assumption \red{\ref{cond:3} holds} in this case for $\theta_{1,p} =  \dfrac{g(L^{e}_{p})}{(p+1)!}$ and $\theta_{2,p} = \delta$. Finally, if $p=2$ and $g(\cdot) = \max(\cdot)$, then $x_{k+1}$ is the global solution of the subproblem \eqref{eq:sub_prbm} and hence,  using similar arguments as above, we can prove that Assumption \ref{cond:3} also holds  in this case.
\end{proof}


\begin{proof}\!\textit{of Lemma} \ref{lemcv}. 
Note that the sequence $\lambda_{k}$ is nonincreasing and nonnegative, thus it is convergent.  Let us consider first  $\theta \leq 1$. Since  $\lambda_{k}-\lambda_{k+1}$ converges to $0$,  then there exists $k_{0}$ such that $\lambda_{k}-\lambda_{k+1} \leq 1$ and  $\lambda_{k+1}\leq (C_{1}+C_{2})\left(\lambda_{k}-\lambda_{k+1}\right)$ for all $k \geq k_{0}$.
It follows that:

\vspace{-0.5cm}

\begin{align*}
\lambda_{k+1}\leq \frac{C_{1}+C_{2}}{1+C_{1}+C_{2}}\lambda_{k},
\end{align*}
which proves the first statement.  If $1<\theta \leq 2$, then there exists also an integer $k_{0}$ such that  $\lambda_{k}-\lambda_{k+1} \leq 1$  for all $k\geq k_{0}$. Then, we have:
\begin{align*}
\lambda_{k+1}^{\theta}\leq (C_{1}+C_{2})^{\theta}\left(\lambda_{k}-\lambda_{k+1}\right).
\end{align*}
Since $1<\theta \leq 2$, then  taking  $0 < \beta = \theta -1 \leq 1$, we have:
\begin{align*}
\left(\frac{1}{C_{1}+C_{2}}\right)^{\theta}\lambda_{k+1}^{1+\beta}\leq \lambda_{k}-\lambda_{k+1},
\end{align*}
for all $k\geq k_{0}$. From Lemma 11 in \cite{Nes:20Inxt}, we further have:
\begin{align*}
\lambda_{k}\leq \frac{\lambda_{k_{0}}}{(1+\sigma (k-k_{0}))^{\frac{1}{\beta}}}
\end{align*}
for all $k\geq k_{0}$ and for some $\sigma >0$. Finally, if $\theta > 2$, then define $h(s)\blue{=}s^{-\theta}$ and let $R>1$ be fixed. Since $1/\theta < 1$, then there {exists} a $k_{0}$ such that $\lambda_{k}-\lambda_{k+1} \leq 1$ for all $k \geq k_{0}$. Then,  we have $\lambda_{k+1}\leq (C_{1}+C_{2})\left(\lambda_{k}-\lambda_{k+1}\right)^{\frac{1}{\theta}}$, or equivalently:
\begin{align*}
1\leq (C_{1}+C_{2})^{\theta}(\lambda_{k}-\lambda_{k+1})h(\lambda_{k+1}).
\end{align*}
If we assume that  $h(\lambda_{k+1})\leq R h(\lambda_{k}) $, then:
\begin{align*}
1\leq R(C_{1}+C_{2})^{\theta}(\lambda_{k}-\lambda_{k+1})h(\lambda_{k})&\leq 
\frac{R(C_{1}+C_{2})^{\theta}}{-\theta+1}\left(\lambda_{k}^{-\theta+1}-\lambda_{k+1}^{-\theta+1} \right).
\end{align*}
Denote $\mu=\frac{-R(C_{1}+C_{2})^{\theta}}{-\theta+1}$. Then:
\begin{align}\label{eq:recc1}
0<\mu^{-1} \leq \lambda_{k+1}^{1-\theta}  -  \lambda_{k}^{1-\theta}.
\end{align}
If we assume that $h(\lambda_{k+1})> R h(\lambda_{k}) $ and set $\gamma = R^{-\frac{1}{\theta}}$, then it follows immediately that $\lambda_{k+1}\leq \gamma \lambda_{k}$. Since $1-\theta$ is negative, we get:
\begin{align*}
\lambda_{k+1}^{1-\theta}&\geq \gamma^{1-\theta}\lambda_{k}^{1-\theta} \quad \iff \quad 
\lambda_{k+1}^{1-\theta} - \lambda_{k}^{1-\theta} \geq (\gamma^{1-\theta}-1)\lambda_{k}^{1-\theta}.
\end{align*}
Since $1- \theta<0$,  $\gamma^{1-\theta} > 1$ and $\lambda_{k}$ has a nonnegative limit, then there exists $\bar{\mu} > 0$ such that $(\gamma^{1-\theta} - 1) \lambda_{k}^{1-\theta} > \bar{\mu}$ for all $k \geq k_0$. Therefore,  in this case we also obtain:
\begin{align}\label{eq:recc2}
0<  \bar{\mu} \leq \lambda_{k+1}^{1-\theta}-\lambda_{k}^{1-\theta}.
\end{align}
If we set $\hat{\mu}=\min(\mu^{-1},\bar{\mu})$ and combine \eqref{eq:recc1} and \eqref{eq:recc2}, we  obtain:
\begin{align*}
0<  \hat{\mu} \leq  \lambda_{k+1}^{1-\theta}-\lambda_{k}^{1-\theta}.
\end{align*}
Summing the last  inequality from $k_{0}$ to $k$, we obtain $\lambda_{k}^{1-\theta}-\lambda_{k_{0}}^{1-\theta}\geq \hat{\mu}(k-k_{0})$, i.e.:

\vspace{-0.3cm}

\begin{center}
$\lambda_{k} \leq   \frac{\hat{\mu}^{-\frac{1}{\theta-1}}}{(k-k_{0})^{\frac{1}{\theta-1}}}$
\end{center}

\vspace{-0.3cm}

\noindent for all $k \geq k_0$. This concludes our proof. 
\end{proof}


\end{appendices}


\section*{Acknowledgements}
\noindent The research leading to these results has received funding from: ITN-ETN project TraDE-OPT funded by the EU, H2020 Research and Innovation Programme under the Marie Skolodowska-Curie grant agreement No. 861137;  NO Grants 2014–2021, under project ELO-Hyp, contract no. 24/2020; UEFISCDI PN-III-P4-PCE-2021-0720, under project L2O-MOC, nr. 70/2022.\\\\
\noindent \textbf{Data availability} Not applicable.\\\\
\noindent\textbf{Conflicts of interest} The authors declare that they have no conflict of interest.


\end{document}